\newtheorem{De}{Definition}
\newtheorem{Th}[De]{Theorem}
\newtheorem{Pro}[De]{Proposition}
\newtheorem{Le}[De]{Lemma}
\newtheorem{Co}[De]{Corollary}
\def\xto#1{\xrightarrow[]{#1}}
\begin{document}

\title{Endomorphisms in short exact sequences}

\author[M. Pirashvili]{Mariam  Pirashvili}
\thanks{Research was partially supported by the grant ''DI/27/5-103/12,D-13/2 Homological and categorical methods in topology, algebra and theory of stacks''}

\maketitle

\section{Introduction}
Recently,  Passi, Singh and Yadav constructed interesting exact sequences of automorphisms involved in group extensions with an abelian kernel, published in \cite{passi}. We wish to shed new light on these exact sequences. First, we show that the exactness holds even for endomorphisms. This, it turns out, is a lot easier to derive from known results, such as the five-term exact sequence. Moreover, these endomorphisms carry exotic ring structures and one obtains the exact sequence of automorphisms by restricting our exact sequence to quasi-regular elements, giving an alternative way to prove the exact sequence due to Passi, Singh and Yadav.

The second section of the paper states the main results involving two main theorems which are then expanded and proved in sections 3 and 4 respectively. In the last section, the ring structures mentioned above are explicitly calculated for a couple of examples.

\section{The main results}\label{1}
Let $0\to N\xrightarrow{i} G\xrightarrow{p} Q \to 1$ be an abelian extension of groups. We use additive notation for $N$ and  $G$, though we do not assume that $G$ is abelian.

It is generally known that $N$ has a $Q$-module structure induced by the conjugation in $G$. We let $End_Q(N)$ be the set of all $Q$-module endomorphisms of $N$, which is a ring under pointwise addition and composition.

We denote by $End^{N,Q}(G)$ the set of all endomorphisms of $G$ which centralise $N$ and induce the identity on $Q$,
 i.e. the set of all homomorphisms $\alpha:G\to G$ which fit in the following commutative diagram:
$$\xymatrix{0\ar[r]&N\ar@{=}[d]\ar[r]&G\ar[d]^{\alpha}\ar[r] &Q\ar@{=}[d]\ar[r] &1\\
0\ar[r]& N\ar[r] &G\ar[r] &Q\ar[r] &1.}$$

We denote by $End^Q_N(G)$ the set of all endomorphisms of $G$ which normalise $N$ and induce the identity on $Q$, i.e. the set of all homomorphisms $\alpha:G\to G$ such that $\alpha(N)\subset N$. Denote by $\beta$ the restriction of $\alpha$ on $N$. Then one has a commutative diagram
$$\xymatrix{0\ar[r]&N\ar[d]_{\beta}\ar[r]&G\ar[d]^{\alpha}\ar[r]^p&Q\ar@ {=}[d]\ar[r] &1\\
0\ar[r]& N\ar[r] &G\ar[r]^p&Q\ar[r] &1.}$$ It is clear that $End^{N,Q}(G)\subset End^Q_N(G)$. We also have a map
$$\rho:End^Q_N(G)\to End_Q(N)$$ given by $\rho(\alpha)=\beta$. 
The composition of endomorphisms of $G$ yields a multiplicative monoid structure on $End^Q_N(G)$. It turns out that there are other binary operations on $End^Q_N(G)$ with nicer properties.
\begin{Th} \label{th1} \begin{enumerate} 
\item There is an associative, but not necessarily unital ring structure on $End^Q_N(G)$, such that the ''addition'' $\boxplus$ and ''multiplication'' $ \boxtimes$ are defined by
$$(\alpha_1\boxplus \alpha_2)(x):=\alpha_1(x)-x+\alpha_2(x).$$
$$(\alpha_2\boxtimes \alpha_1)(x):=\alpha_2\circ \alpha_1(x)-\alpha_1(x)+x-\alpha_2(x)+x,$$
Here $\alpha_1,\alpha_2\in End^Q_N(G)$ and $x\in G$. Moreover the ''zero'' element of this ring is $id_G\in End^Q_N(G)$.
\item $End^{N,Q}(G)$ is a square zero  ideal  of $End^Q_N(G)$ and the additive structure on $End^{N,Q}(G)$ induced from $End^Q_N(G)$ is given by the composition of endomorphisms.
\item The map $\rho:End^Q_N(G)\to End_Q(N)$ is a homomorphism of rings and it fits in the following exact sequence of abelian groups
\begin{equation}\label{eta}0\to End^{N,Q}(G)\to End^Q_N(G)\xto{\rho} End_Q(N)\xto{\eta} H^2(Q,N)\xto{p^*} H^2(G,N).\end{equation}
 \end{enumerate}
\end{Th}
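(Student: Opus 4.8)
The plan is to translate everything into the language of \emph{derivations} (1-cocycles). For $\alpha\in End^Q_N(G)$ put $f_\alpha(x):=\alpha(x)-x$. The hypothesis that $\alpha$ induces the identity on $Q$ means $p\circ\alpha=p$; this already forces $\alpha(N)\subseteq N$, so the ``normalising'' clause is automatic, and it gives $p(f_\alpha(x))=0$, i.e. $f_\alpha\colon G\to N$. A direct check shows that $\alpha$ is a homomorphism exactly when $f_\alpha(x+y)=f_\alpha(x)+{}^{p(x)}f_\alpha(y)$, where ${}^{q}n$ is the $Q$-action (so ${}^{p(x)}n=x+n-x$); that is, $f_\alpha\in Z^1(G,N)=Der(G,N)$. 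Hence $\alpha\mapsto f_\alpha$ is a bijection $End^Q_N(G)\xrightarrow{\sim}Der(G,N)$. Under it $\beta=\rho(\alpha)=\alpha|_N\in End_Q(N)$ satisfies $f_\alpha|_N=\beta-id_N$, and $\alpha\in End^{N,Q}(G)$ iff $f_\alpha|_N=0$, i.e. iff $f_\alpha$ factors through $p$ as a derivation $Q\to N$; so the bijection carries $End^{N,Q}(G)$ onto the inflated subgroup $Z^1(Q,N)=Der(Q,N)\subseteq Der(G,N)$.

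With this dictionary, parts (1) and (2) are short cocycle computations. One finds $f_{\alpha_1\boxplus\alpha_2}=f_{\alpha_1}+f_{\alpha_2}$, so $\boxplus$ is pointwise addition of cocycles: associative, commutative, with neutral element the zero cocycle, i.e. $id_G$. Likewise $f_{\alpha_2\boxtimes\alpha_1}=(\beta_2-id_N)\circ f_{\alpha_1}$ (the $f_{\alpha_2}$-terms cancel because $N$ is abelian); associativity of $\boxtimes$ and both distributive laws then read off this formula, and non-unitality is manifest. For (2), $End^{N,Q}(G)=\{\alpha:f_\alpha|_N=0\}$ is a two-sided ideal: if $\beta_2=id_N$ the product $\boxtimes$ is the zero cocycle, while if $f_{\alpha_1}|_N=0$ then $f_{\alpha_2\boxtimes\alpha_1}|_N=(\beta_2-id_N)(0)=0$; and it squares to zero since both factors make $\beta_2-id_N=0$. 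Finally, when $\beta_1=\beta_2=id_N$ one computes $f_{\alpha_1\circ\alpha_2}=f_{\alpha_1}+f_{\alpha_2}=f_{\alpha_1\boxplus\alpha_2}$, so on $End^{N,Q}(G)$ the operation $\boxplus$ is composition.

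For (3), the formulas above show that $\rho$, read through the dictionary as the restriction map $Der(G,N)\to End_Q(N)$, $f_\alpha\mapsto f_\alpha|_N=\beta-id_N$, is a ring homomorphism (the shift by $id_N$ is exactly what the choice of $id_G$ as zero of $End^Q_N(G)$ demands). For exactness I would invoke the Lyndon--Hochschild--Serre five-term exact sequence of $1\to N\to G\to Q\to 1$ with coefficients in the $G$-module $N$. As $N$ is abelian it acts trivially on itself, so $N^N=N$, $H^1(N,N)=Hom(N,N)$ and $H^1(N,N)^Q=End_Q(N)$, and the sequence becomes
\[0\to H^1(Q,N)\to H^1(G,N)\to End_Q(N)\xrightarrow{\tau}H^2(Q,N)\xrightarrow{p^*}H^2(G,N),\]
with $\tau$ the transgression and $p^*$ the inflation. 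Set $\eta:=\tau$. It remains to replace $H^1(Q,N),H^1(G,N)$ by the cocycle groups $Z^1(Q,N)\cong End^{N,Q}(G)$ and $Z^1(G,N)\cong End^Q_N(G)$. Exactness at the two left-hand terms is the direct computation that inflation $Z^1(Q,N)\to Z^1(G,N)$ is injective with image $\{f:f|_N=0\}=\ker(\rho)$; and the only passage from cocycles to cohomology, at $End_Q(N)$, is harmless because restriction annihilates $B^1(G,N)$ (the principal derivation $x\mapsto n-{}^{p(x)}n$ vanishes on $N$), so $\rho$ has the same image on $Z^1(G,N)$ as on $H^1(G,N)$, namely $\ker\tau$. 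Exactness at $H^2(Q,N)$ is that of the five-term sequence.

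The main obstacle is part (3): matching the cocycle-level sequence \eqref{eta} to the cohomology-level five-term sequence and, in particular, identifying $\eta$ with the transgression --- concretely, with the obstruction in $H^2(Q,N)$ to extending a given $\phi\in End_Q(N)$, viewed as a derivation of $N$, to a derivation of $G$, i.e. to realising $\phi+id_N$ as the restriction $\beta$ of some $\alpha\in End^Q_N(G)$. The remaining items --- the cocycle identity, the ring axioms for $\boxtimes$, and the ideal and square-zero properties --- are routine once the dictionary of the first paragraph is set up.
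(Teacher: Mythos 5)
Your proposal is correct and takes essentially the same route as the paper: the same dictionary $\alpha\mapsto f_\alpha=\alpha-id$ identifying $End^Q_N(G)$ with $Z^1(G,N)$ and $End^{N,Q}(G)$ with $Z^1(Q,N)$, the same transported ring structure (your product $(\beta_2-id_N)\circ f_{\alpha_1}$ is exactly the paper's $\phi\diamond\psi=\phi\circ i\circ\psi$), and the five-term exact sequence for part (3). The only difference is cosmetic: the paper cites the cocycle-level sequence $0\to Z^1(Q,N)\to Z^1(G,N)\to End_Q(N)\to H^2(Q,N)\to H^2(G,N)$ directly from Hilton--Stammbach, whereas you rederive it from the $H^1$-level five-term sequence, using the (correct) observations that inflation has image $\ker(\rho)$ on cocycles and that principal derivations vanish on $N$.
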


In the definition of $End^{N,Q}(G)$ and $End^Q_N(G)$ instead of endomorphisms we could consider automorphisms $\alpha:G\to G$. In this way one obtains the groups $Aut^{N,Q}(G)$ and $Aut^Q_N(G)$. By the 5-Lemma we have $$End^{N,Q}(G)=Aut^{N,Q}(G).$$
By definition $End^Q_N(G)$ contains $Aut^Q_N(G)$ and the homomorphism $\rho$ restricts to a homomorphism of groups $\rho':Aut^Q_N(G)\to Aut_Q(N)$. Here $Aut_Q(N)$ consists of all $Q$-automorphisms $N\to N$.
\begin{Co}\label{co1} The exact sequence in Theorem \ref{th1} (3) restricted to quasi-regular elements yields an exact sequence 
$$1\to Aut^{N,Q}(G)\xto{i'} Aut^Q_N(G)\xto{\rho'} Aut_Q(N)\xto{\eta} H^2(Q,N),$$
where all maps are group homomorphisms except $\eta$, which is a pointed map.
\end{Co}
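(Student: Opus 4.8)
The plan is to obtain the corollary from the exact sequence \eqref{eta} of Theorem \ref{th1}(3) by passing to quasi-regular elements, using the standard fact that the quasi-regular elements of any associative (possibly non-unital) ring form a group under the adjoint operation $a\diamond b=a\boxplus b\boxplus(a\boxtimes b)$, whose neutral element is the ring's zero. The first step is to compute this adjoint operation for the exotic ring $End^Q_N(G)$. Writing $u=\alpha(x)$, $v=\beta(x)$, $w=\alpha(\beta(x))$ and using that $-x+\alpha(x)$ and $-x+\beta(x)$ lie in the \emph{abelian} group $N$, the apparent non-commutativity of $G$ collapses and one finds
$$\bigl(\alpha\boxplus\beta\boxplus(\alpha\boxtimes\beta)\bigr)(x)=\alpha(\beta(x)).$$
Thus the adjoint operation is nothing but ordinary composition, with neutral element the ring's zero $id_G$. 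Hence an element of $End^Q_N(G)$ is quasi-regular exactly when it is invertible under composition, which, via the short five lemma applied to the morphism of extensions with $id_Q$ and $\beta$ isomorphisms, happens precisely for $\alpha\in Aut^Q_N(G)$. The same calculation in $End_Q(N)$ identifies its group of quasi-regular elements with $Aut_Q(N)$, while for the square-zero ideal $End^{N,Q}(G)$ every element is quasi-regular (its quasi-inverse being its $\boxplus$-negative), recovering $End^{N,Q}(G)=Aut^{N,Q}(G)$.

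Next I would use that a ring homomorphism sends quasi-regular elements to quasi-regular elements and restricts to a homomorphism of the associated adjoint groups. Applied to $\rho$ and to the inclusion of the ideal, this makes $i'$ and $\rho'$ homomorphisms for the composition group structures, with $\rho'$ given concretely by $\alpha\mapsto\alpha|_N=\beta$. The map $\eta$ was additive only for the $\boxplus$-structure of $End_Q(N)$; restricted to $Aut_Q(N)$ with its composition law it therefore survives merely as a pointed map, sending $id_N=\rho(id_G)$ to the base point $0$ because $id_N$ already lies in $\mathrm{im}\,\rho=\ker\eta$.

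It remains to verify exactness at the three spots, and the point is that each statement is inherited from \eqref{eta}. Injectivity of $i'$ is immediate as the restriction of an injection. At $Aut^Q_N(G)$ the kernel of $\rho'$ consists of those automorphisms with $\alpha|_N=id_N$, i.e. exactly $End^{N,Q}(G)=Aut^{N,Q}(G)$, which is precisely exactness of \eqref{eta} at $End^Q_N(G)$.

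The only non-formal point will be exactness at $Aut_Q(N)$: I must show that every $\beta\in Aut_Q(N)$ with $\eta(\beta)=0$ equals $\rho'(\alpha)$ for some automorphism $\alpha$. Exactness of \eqref{eta} at $End_Q(N)$ already supplies an endomorphism $\alpha\in End^Q_N(G)$ with $\rho(\alpha)=\beta$, but a priori this $\alpha$ need not be bijective. The remedy is once more the five lemma: since $\alpha$ restricts to the isomorphism $\beta$ on $N$ and induces $id_Q$ on $Q$, it must be an automorphism of $G$, so $\alpha\in Aut^Q_N(G)$ and $\rho'(\alpha)=\beta$. The reverse inclusion $\rho'(Aut^Q_N(G))\subseteq\eta^{-1}(0)$ is clear from $\mathrm{im}\,\rho\subseteq\ker\eta$, which closes the sequence at $Aut_Q(N)$ and finishes the proof.
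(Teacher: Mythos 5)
Your proposal is correct, and its skeleton is the one the paper follows: identify the adjoint operation $\alpha\boxplus\beta\boxplus(\alpha\boxtimes\beta)$ of the exotic ring with ordinary composition, deduce that quasi-regular elements are exactly the automorphisms in all three terms, and transfer exactness from the sequence \eqref{eta}. The genuine divergence is at the one non-formal spot, exactness at $Aut_Q(N)$. The paper argues ring-theoretically: it factors \eqref{eta} through the square-zero-ideal extension $0\to End^{N,Q}(G)\to End^Q_N(G)\to Im(\rho)\to 0$ and invokes Proposition \ref{QR}(1) to conclude that the endomorphism $g$ lifting a given $\beta\in Ker\,\eta$ is itself quasi-regular, hence an automorphism. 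You instead apply the five lemma directly to $g$: it restricts to the isomorphism $\beta$ on $N$ and induces $id_Q$ on $Q$, so it is bijective. Your route is arguably the tighter one: Proposition \ref{QR}(1) is stated for a surjection onto $S$, so applying it as the paper does requires knowing that the quasi-inverse of $Res(g)$ lies in the subring $Im(\rho)$, not merely in $End_Q(N)$ --- a point the paper glosses over (it can be repaired using the description of the transgression as pushforward of the extension class, which shows $Ker$ of the transgression is stable under passing to the inverse automorphism). The five lemma sidesteps this entirely, at the price of being special to the group-theoretic situation, whereas Proposition \ref{QR} is a reusable ring-theoretic statement (the paper also gets exactness at the first two spots from its part (2), which you verify by direct kernel computations --- an immaterial difference). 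One cosmetic correction: in identifying $QR(End^Q_N(G))$ with $Aut^Q_N(G)$ you invoke the short five lemma, but that lemma runs in the wrong direction there; what is actually needed is the easy observation that a bijective $\alpha$ with $p\alpha=p$ satisfies $\alpha^{-1}(N)\subseteq N$ and $p\alpha^{-1}=p$, so that $\alpha^{-1}$ again lies in $End^Q_N(G)$. The five lemma is correctly placed only in your final lifting step.
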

We denote by $End^N(G)$ (resp. $Aut^N(G)$) the set of all endomorphisms (resp. automorphisms) of $G$ which centralise $N$, i.e. the set of all homomorphisms (resp. isomorphisms) $\alpha:G\to G$ such that the diagram commutes:
$$\xymatrix{0\ar[r]&N\ar@{=}[d]\ar[r]&G\ar[d]^{\alpha}\ar[r]&Q\ar[d]^{\bar{\alpha}}\ar[r] &1\\
0\ar[r]& N\ar[r] &G\ar[r] &Q\ar[r] &1.}$$
We denote by $End^N(Q)$ the set of endomorphisms of $Q$ which preserve the action of $Q$ on $N$, i.e. the set of endomorphisms $\phi$ for which $x\cdot n=\phi(x)\cdot n$ for all $x\in Q$ and $n\in N$.
\begin{Th}\label{th2} One has an exact sequence of pointed sets
\begin{equation}0\to End^{N,Q}(G)\xto{\bar{i}} End^N(G)\xto{\bar{\rho}}End^N(Q)\to H^2(Q,N),\end{equation}
where $\bar{i}$ and $\bar{\rho}$ are monoid homomorphisms.
\end{Th}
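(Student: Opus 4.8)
The plan is to define the three maps explicitly, observe that exactness at the first two spots is essentially formal, and then concentrate on the third spot, which carries all the real content via an obstruction argument. The map $\bar i$ is simply the inclusion $End^{N,Q}(G)\subset End^N(G)$, which is legitimate since an endomorphism that is the identity on $N$ and induces the identity on $Q$ in particular induces \emph{some} endomorphism of $Q$. The map $\bar\rho$ sends $\alpha$ to the induced endomorphism $\bar\alpha$ of $Q$; both $\bar i$ and $\bar\rho$ are visibly monoid homomorphisms for composition. Before anything else I would check that $\bar\rho$ really lands in $End^N(Q)$: writing the $Q$-action as conjugation $q\cdot n = g+n-g$ for any lift $g$ of $q$ and using $\alpha(n)=n$, one computes $q\cdot n = \alpha(g)+n-\alpha(g) = \bar\alpha(q)\cdot n$, which is exactly the action-preserving condition defining $End^N(Q)$.

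For the last map I would use the extension class. Let $[\xi]\in H^2(Q,N)$ denote the class of the given extension. For $\phi\in End^N(Q)$ form the pullback extension $\phi^*\xi$, realised by $\phi^*G=\{(g,q)\in G\times Q: p(g)=\phi(q)\}$ with projection $(g,q)\mapsto q$ and kernel copy $n\mapsto(n,0)$. The key point, and the reason one must restrict to $End^N(Q)$, is that the conjugation action of $Q$ on $N$ inside $\phi^*G$ is $q\cdot n=\phi(q)\cdot n=q\cdot n$, so $\phi^*\xi$ induces the \emph{same} $Q$-module structure on $N$ and hence $\phi^*[\xi]$ lives in the same group $H^2(Q,N)$. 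I then define the last map by $\phi\mapsto \phi^*[\xi]-[\xi]$; it sends the basepoint $\mathrm{id}_Q$ to $0$ since $\mathrm{id}_Q^*[\xi]=[\xi]$.

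Exactness at the first two spots is then immediate. Injectivity of $\bar i$ is clear. At $End^N(G)$, an element $\alpha$ with $\bar\rho(\alpha)=\mathrm{id}_Q$ is precisely an endomorphism that is the identity on $N$ and induces the identity on $Q$, i.e. an element of $End^{N,Q}(G)$, which is exactly the image of $\bar i$.

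The crux is exactness at $End^N(Q)$, the obstruction-theoretic statement that $\phi$ lifts to some $\alpha\in End^N(G)$ with $\bar\alpha=\phi$ if and only if $\phi^*[\xi]=[\xi]$. For one direction, a lift $\alpha$ (identity on $N$, covering $\phi$) produces a morphism of extensions $G\to\phi^*G$, $g\mapsto(\alpha(g),p(g))$, which is over $Q$ and fixes $N$, exhibiting an equivalence $\xi\simeq\phi^*\xi$, whence $\phi^*[\xi]=[\xi]$. Conversely, if $\phi^*[\xi]=[\xi]$ I would choose an equivalence $G\xrightarrow{\sim}\phi^*G$ of extensions fixing $N$ and compose it with the first projection $\phi^*G\to G$, $(g,q)\mapsto g$, to obtain $\alpha$, which is the identity on $N$ and covers $\phi$. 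I expect the main obstacle to lie exactly here: assembling this obstruction argument cleanly, in particular verifying the action-compatibility that places $[\xi]$ and $\phi^*[\xi]$ in one and the same cohomology group, and checking that an equivalence of extensions really does translate into the existence of the lift $\alpha$ as a genuine group homomorphism.
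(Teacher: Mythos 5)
Your proof is correct, but it takes a genuinely different route from the paper. You stay entirely at the level of group extensions and run the classical obstruction argument: the connecting map is $\phi\mapsto \phi^*[\xi]-[\xi]$, where $[\xi]$ is the class of the given extension, and exactness at $End^N(Q)$ becomes the lifting criterion ``$\phi$ lifts to $End^N(G)$ iff $\phi^*[\xi]=[\xi]$'', proved by passing back and forth between a lift $\alpha$ and an equivalence $G\xrightarrow{\sim}\phi^*G$ of extensions. Your verification that $\phi^*\xi$ induces the same $Q$-module structure on $N$ (which is exactly where the hypothesis $\phi\in End^N(Q)$ enters, so that $\phi^*[\xi]$ lies in the same group $H^2(Q,N)$) is the right key point, and the two directions of the lifting argument are sound: one uses the short five lemma to see that $g\mapsto(\alpha(g),p(g))$ is an equivalence, the other composes an equivalence with the projection $\phi^*G\to G$. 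The paper instead translates everything into crossed homomorphisms via the dictionary $\alpha(x)=\psi(x)+x$: it identifies $End^{N,Q}(G)\cong Z^1(Q,N)$, $End^N(G)\cong Z^1(Q,C_G(N))$ and $End^N(Q)\cong Z^1(Q,\bar Q)$ with $\bar Q=C_G(N)/N=\ker(Q\to Aut(N))$, and then quotes Serre's low-dimensional exact sequence of nonabelian cohomology for the central extension $0\to N\to C_G(N)\to\bar Q\to 1$. What each approach buys: yours is self-contained and elementary, needing only the $H^2$-classification of extensions with a fixed module structure, and it makes the connecting map completely explicit; the paper's is shorter given the cited machinery, and its cocycle dictionary is the same one underlying the proof of Theorem \ref{th1}, so the two theorems get a uniform treatment (the centraliser $C_G(N)$, which is the structural heart of the paper's argument, never appears in yours). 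Note also that the two connecting maps need not coincide on the nose, but the theorem only asserts existence of an exact sequence, so this is immaterial.
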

If one restricts the exact sequence of monoids in Theorem \ref{th2} on invertible elements we obtain the following result.
\begin{Co}\label{co2} 
The sequence
$$1\to Aut^{N,Q}(G)\to Aut^N(G)\to Aut^N(Q)\xto{\bar{\eta}} H^2(Q,N)$$
is exact, where all maps are group homomorphisms except $\bar{\eta}$, which is  a pointed map.
\end{Co}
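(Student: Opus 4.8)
The plan is to deduce this corollary from Theorem \ref{th2} by passing to the subgroups of invertible elements. Since composition of endomorphisms is the monoid operation on each of $End^N(G)$ and $End^N(Q)$, their groups of units are precisely $Aut^N(G)$ and $Aut^N(Q)$; moreover the leftmost term is unchanged because $End^{N,Q}(G)=Aut^{N,Q}(G)$, as already noted via the 5-Lemma. As $\bar i$ and $\bar\rho$ are monoid homomorphisms, they carry units to units (if $\beta\gamma=\gamma\beta=id$ then $\bar\rho(\beta)\bar\rho(\gamma)=\bar\rho(\gamma)\bar\rho(\beta)=id$), so they restrict to group homomorphisms $Aut^{N,Q}(G)\to Aut^N(G)$ and $Aut^N(G)\to Aut^N(Q)$, while $\bar\eta$ restricts to a pointed map whose basepoint is the identity, since $id_Q=\bar\rho(id_G)$ forces $\bar\eta(id_Q)=0$.

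Next I would check exactness at the first two spots, which comes essentially for free. Injectivity of the restricted $\bar i$ is inherited from the injectivity of $\bar i$ in Theorem \ref{th2}. For exactness at $Aut^N(G)$, note that the basepoints of both $Aut^N(G)$ and $Aut^N(Q)$ are the identity maps, so the kernel of the restricted $\bar\rho$ is $\{\alpha\in Aut^N(G):\bar\rho(\alpha)=id\}$; by exactness of the Theorem \ref{th2} sequence this set equals $Aut^N(G)\cap\bar i(End^{N,Q}(G))$, and since $\bar i(End^{N,Q}(G))=\bar i(Aut^{N,Q}(G))$ already consists of automorphisms of $G$, the intersection is exactly the image of the restricted $\bar i$.

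The only substantive point, and the one I expect to be the main obstacle, is exactness at $Aut^N(Q)$: I must show that if $\phi\in Aut^N(Q)$ satisfies $\bar\eta(\phi)=0$, then $\phi$ lies in the image of the \emph{restricted} $\bar\rho$, i.e. it is induced by an honest automorphism of $G$ and not merely by an endomorphism. Theorem \ref{th2} supplies some $\alpha\in End^N(G)$ with $\bar\rho(\alpha)=\phi$, so $\alpha$ sits in a commutative ladder whose vertical maps are the identity on $N$ and $\phi$ on $Q$. The key observation is that invertibility of this lift is then automatic: since the map on $N$ is the identity and $\phi$ is an isomorphism, the 5-Lemma forces $\alpha$ to be an isomorphism, whence $\alpha\in Aut^N(G)$. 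The reverse inclusion, that the image of the restricted $\bar\rho$ lands in the kernel of $\bar\eta$, is inherited verbatim from Theorem \ref{th2}, which completes the verification of all four exactness conditions.
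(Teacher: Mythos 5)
Your proposal is correct and follows essentially the same route as the paper: restrict the exact sequence of Theorem \ref{th2} to invertible elements, observe that exactness at the first two terms is immediate, and for the crucial exactness at $Aut^N(Q)$ lift $\phi\in Ker(\bar\eta)$ to an endomorphism $\alpha\in End^N(G)$ via Theorem \ref{th2} and invoke the 5-Lemma (identity on $N$, isomorphism on $Q$) to conclude $\alpha$ is an automorphism. The paper's proof is the same argument, merely stated more tersely via a commutative diagram comparing the two sequences.
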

The exact sequences in Corollary \ref{co1} and Corollary \ref{co2} can be found in Theorem 1 of the paper \cite{passi} by Passi, Singh and Yadav, where  $Aut_Q(N)$ is denoted by $C_1$ and $Aut^N(Q)$ by $C_2$. The proofs given in  \cite{passi} use completely different methods. Our proofs are based on the well-known 5-term exact sequence in group cohomology \cite{ht} and the low dimensional exact sequence of non-abelian cohomology associated to a central extension \cite{serre}.

Finally on notations. If $G$ is a group and $U$ is a $G$-group, we denote by $Z^1(G,U)$ the set of all crossed homomorphisms $G\to U$. This is an abelian group, provided $U$  is abelian.  Sometimes crossed homomorphisms are called $1$-cocycles.

\section{Proofs of Theorem \ref{th1} and Corollary \ref{co1}}

\subsection{Ring structure on $Z^1(G,N)$} A ring in this paper means an associative  but not necessarily unital ring.  Recall that we have actions of $G$ and $Q$ on $N$ induced by the conjugation action of $G$. In particular, $N$ acts on itself trivially.
Let us take a closer look at $Z^1(G,N)$. We equip the abelian group $Z^1(G,N)$ with a ring structure.  

\begin{Le} \label{Res}
Let $\phi\in Z^1(G,N)$. Restricted to $N$, the crossed homomorphism $\phi$ is a $Q$-homomorphism.
\end{Le}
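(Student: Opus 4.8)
The plan is to unwind the defining cocycle identity of a crossed homomorphism and read off both assertions implicitly contained in the statement: that $\phi|_N$ is a homomorphism of groups, and that it is $Q$-equivariant. Throughout I write the action of $G$ (and of $Q$) on $N$ multiplicatively as $g\cdot m$, and I use that a crossed homomorphism satisfies $\phi(a+b)=\phi(a)+a\cdot\phi(b)$ for all $a,b\in G$.

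First I would record the two elementary consequences of this identity that the argument needs. Putting $a=b=0$ gives $\phi(0)=0$, and then feeding $0=g+(-g)$ into the identity yields the inverse formula $\phi(-g)=-\,q^{-1}\!\cdot\phi(g)$, where $q=p(g)$. I would also invoke the remark made just before the lemma that $N$ acts on itself trivially, because the conjugation action of $N\subset G$ on the abelian group $N$ is trivial; equivalently the $G$-action on $N$ factors through $Q$, so $g\cdot m=p(g)\cdot m$ for all $g\in G$, $m\in N$.

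The additivity of $\phi|_N$ is then immediate: for $n,m\in N$ the cocycle identity gives $\phi(n+m)=\phi(n)+n\cdot\phi(m)=\phi(n)+\phi(m)$, the last equality because $n\in N$ acts trivially on $\phi(m)\in N$. So $\phi|_N\colon N\to N$ is a group homomorphism.

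For $Q$-equivariance, fix $q\in Q$ and $n\in N$, choose a lift $g\in G$ with $p(g)=q$, and realise the $Q$-action by conjugation as $q\cdot n=g+n-g\in N$. The one genuine computation is to evaluate $\phi(g+n-g)$ by applying the cocycle identity to the splitting $g+(n-g)$, then expanding $\phi(n-g)$ via triviality of the $N$-action and the inverse formula. I expect the two conjugating contributions, $\phi(g)$ and $-\,g\cdot(q^{-1}\!\cdot\phi(g))=-\phi(g)$, to cancel and leave exactly $q\cdot\phi(n)$, giving $\phi(q\cdot n)=q\cdot\phi(n)$. This bookkeeping of the conjugation --- tracking whether the $g$- or the $q$-action is applied at each step, collapsing $g\cdot(q^{-1}\!\cdot(-))$ to the identity, and using that $N$ is abelian to cancel the surviving terms --- is the only step that requires care, and it is the one I would treat as the crux.
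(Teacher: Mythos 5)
Your proof is correct and takes essentially the same approach as the paper: both establish additivity from the cocycle identity plus the triviality of the $N$-action, and both prove equivariance by expanding $\phi(g+n-g)$ as $\phi(g)+g\cdot\phi(n-g)$, splitting $\phi(n-g)$, and cancelling the $\phi(g)$-contributions using that $N$ is abelian. The only cosmetic difference is that you evaluate $\phi(-g)$ explicitly via the inverse formula $\phi(-g)=-q^{-1}\cdot\phi(g)$, whereas the paper never computes $\phi(-y)$ at all, instead recombining $\phi(y)+y\cdot\phi(-y)$ back into $\phi(y-y)=\phi(0)=0$ at the end of the calculation.
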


\begin{proof}
Let $m,n\in N$. Then we have
$$\phi\circ i(m+n)=\phi(m)+m\cdot\phi(n)=\phi(m)+\phi(n),$$
as action of $N$ on itself is trivial. Next, let $\bar y=p(y)\in Q$. Then for $n\in N$
\begin{align*}
\phi\circ i(\bar y\cdot n)&=\phi\circ i(y+n-y)=\phi(y)+y+\phi(n-y)-y\\
&=\phi(y)+y+\phi(n)+n\cdot\phi(-y)-y=\phi(y)+y+\phi(n)+\phi(-y)-y\\
&=\phi(y)+y+\phi(-y)+\phi(n)-y=\phi(y)+y+\phi(-y)-y+y+\phi(n)-y\\
&=\phi(y-y)+y+\phi(n)-y\\
&=\bar y\cdot(\phi\circ i(n))
\end{align*}
\end{proof}

\begin{Le}
For $\phi,\psi\in Z^1(G,N)$ we define
$$\phi\diamond\psi(x):=\phi\circ i\circ\psi(x).$$
Then $\phi\diamond\psi\in Z^1(G,N)$.
\end{Le}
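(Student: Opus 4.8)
The plan is to reduce the cocycle condition for $\phi\diamond\psi$ to the behaviour of crossed homomorphisms under $Q$-module maps, using the lemma just proved. First I would invoke Lemma \ref{Res}, which says that the restriction $\phi\circ i:N\to N$ is a $Q$-module homomorphism; write $\bar\phi:=\phi\circ i\in End_Q(N)$. By the very definition of $\diamond$ we then have $\phi\diamond\psi=\bar\phi\circ\psi$, so the claim becomes: post-composing the crossed homomorphism $\psi$ with the $Q$-linear endomorphism $\bar\phi$ again yields a crossed homomorphism.

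The key observation is that the $G$-action on $N$ is inflated from $Q$, i.e. $g\cdot n=p(g)\cdot n$ for all $g\in G$ and $n\in N$. Consequently every $Q$-module map $N\to N$ is automatically $G$-equivariant; in particular $\bar\phi(g\cdot n)=g\cdot\bar\phi(n)$. This is the only place where the factorisation through $Q$ is needed, and it is what lets the $Q$-equivariance supplied by Lemma \ref{Res} feed into the cocycle identity, which is phrased in terms of the $G$-action.

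With this in hand the verification is a one-line computation. For $x,y\in G$ I would write
$$(\bar\phi\circ\psi)(x+y)=\bar\phi\bigl(\psi(x)+x\cdot\psi(y)\bigr)=\bar\phi(\psi(x))+x\cdot\bar\phi(\psi(y)),$$
where the first equality uses that $\psi$ is a crossed homomorphism, and the second uses that $\bar\phi$ is additive together with the $G$-equivariance noted above. The right-hand side is exactly $(\phi\diamond\psi)(x)+x\cdot(\phi\diamond\psi)(y)$, so $\phi\diamond\psi\in Z^1(G,N)$.

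I do not expect a genuine obstacle here; the only subtlety is the identification of the $Q$-equivariance from Lemma \ref{Res} with the $G$-equivariance actually used in the cocycle condition, which is immediate since the action is pulled back along $p$. A more hands-on alternative would avoid Lemma \ref{Res} and expand $\phi\bigl(i(\psi(x+y))\bigr)$ directly, using that $i$ is a homomorphism, that $i$ intertwines the $Q$-action with conjugation in $G$, and that elements of $i(N)$ act trivially on $N$. That route reaches the same identity but with considerably more bookkeeping, so routing the argument through $\bar\phi=\phi\circ i$ is the cleaner path.
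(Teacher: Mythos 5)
Your proof is correct and follows essentially the same route as the paper: expand $(\phi\diamond\psi)(x+y)$ via the cocycle identity for $\psi$, then apply the additivity and equivariance of $\phi\circ i$ furnished by Lemma \ref{Res}. The only difference is cosmetic --- you make explicit the (true and implicitly used) fact that the $G$-action on $N$ is inflated along $p$, so $Q$-equivariance of $\phi\circ i$ gives the needed $G$-equivariance.
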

\begin{proof} To simplify notations, we omit $i$. For $x,y\in G$ we have
\begin{align*}
\phi\diamond\psi(x+y)&=\phi\circ i\circ\psi(x+y)=\phi(\psi(x+y))\\
&=\phi(\psi(x)+x\cdot\psi(y))\\
&=\phi(\psi(x))+\phi(x\cdot \psi(y))\\
&=\phi(\psi(x))+x\cdot \phi(\psi(y)),
\end{align*}
by Lemma \ref{Res}. So $\phi\diamond\psi\in Z^1(G,N)$. \end{proof}
\begin{Le}\label{diamondring} The binary operation $\diamond$ yields a ring structure on $Z^1(G,N)$ and the induced map
$$Res:Z^1(G,N)\to End_Q(N)$$
is a ring homomorphism.
\end{Le}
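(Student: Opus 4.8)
The plan is to identify the ``multiplication'' $\diamond$ with ordinary composition after restriction to $N$, which collapses every ring axiom into a corresponding statement about maps of abelian groups. Throughout I write $\bar\phi:=Res(\phi)=\phi\circ i$ for $\phi\in Z^1(G,N)$; by Lemma \ref{Res} this is a $Q$-endomorphism of $N$, and in particular an additive self-map of $N$. The definition of $\diamond$ can then be read as $\phi\diamond\psi=\bar\phi\circ\psi$, the composite of $\psi\colon G\to N$ with $\bar\phi\colon N\to N$. Since $Res$ is asserted to land in $End_Q(N)$, the membership $\bar\phi\in End_Q(N)$ supplied by Lemma \ref{Res} is exactly what makes the target of $Res$ correct.

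First I would record the two compatibility identities for $Res$. Additivity, $Res(\phi+\psi)=\bar\phi+\bar\psi$, is immediate because the sum in $Z^1(G,N)$ is pointwise and restriction along $i$ commutes with pointwise sums. Multiplicativity is the crux: evaluating on $n\in N$,
$$Res(\phi\diamond\psi)(n)=(\phi\diamond\psi)(i(n))=\bar\phi(\psi(i(n)))=\bar\phi(\bar\psi(n))=(\bar\phi\circ\bar\psi)(n),$$
so $Res(\phi\diamond\psi)=\bar\phi\circ\bar\psi=Res(\phi)\circ Res(\psi)$. Once $\diamond$ is known to be a ring multiplication, these two identities are precisely the statement that $Res$ is a (non-unital) ring homomorphism.

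With these in hand the ring axioms are routine. For associativity, both $(\phi\diamond\psi)\diamond\chi$ and $\phi\diamond(\psi\diamond\chi)$ equal the map $x\mapsto\bar\phi(\bar\psi(\chi(x)))$: indeed $(\phi\diamond\psi)\diamond\chi=\overline{\phi\diamond\psi}\circ\chi=(\bar\phi\circ\bar\psi)\circ\chi$ by the multiplicativity identity, while $\phi\diamond(\psi\diamond\chi)=\bar\phi\circ(\psi\diamond\chi)=\bar\phi\circ(\bar\psi\circ\chi)$, and these agree because composition of functions $G\to N\to N\to N$ is associative. Left distributivity $\phi\diamond(\psi+\chi)=\phi\diamond\psi+\phi\diamond\chi$ follows from the additivity of $\bar\phi$ applied pointwise, and right distributivity $(\phi+\psi)\diamond\chi=\phi\diamond\chi+\psi\diamond\chi$ follows from additivity of $Res$, i.e.\ $\overline{\phi+\psi}=\bar\phi+\bar\psi$. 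This establishes that $(Z^1(G,N),+,\diamond)$ is an associative ring, and then the two displayed identities say exactly that $Res$ is a ring homomorphism.

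I do not expect a genuine obstacle here; the computations are formal. The one place where Lemma \ref{Res} does real work is in guaranteeing that each $\bar\phi$ is an honest $Q$-endomorphism of $N$: its additivity underwrites distributivity and its membership in $End_Q(N)$ fixes the codomain of $Res$, while the preceding lemma guarantees that $\phi\diamond\psi$ again lies in $Z^1(G,N)$, so that $\diamond$ is a well-defined internal operation. Granting these, associativity and distributivity reduce to associativity of function composition and to pointwise additivity, so the only step worth writing out carefully is the multiplicativity identity $Res(\phi\diamond\psi)=Res(\phi)\circ Res(\psi)$.
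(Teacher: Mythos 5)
Your proof is correct and takes essentially the same route as the paper: the only non-formal axiom --- distributing $\diamond$ over a sum on the right --- is exactly where the paper also does its work, and both arguments rest on the additivity of $\phi\circ i$ coming from Lemma \ref{Res}, i.e.\ on the triviality of the $N$-action on itself, with the remaining axioms formal in both treatments. Your repackaging via the identity $\phi\diamond\psi=Res(\phi)\circ\psi$ simply cites Lemma \ref{Res} where the paper re-derives the same additivity inline from the cocycle condition.
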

\begin{proof}
All ring axioms are easy to check. The only one that is not entirely straightforward is the right-sided distributivity axiom. Let us check this. Let $\phi,\xi,\psi\in Z^1(G,N)$. Then we have
\begin{align*}
\phi\diamond(\xi+\psi)(x)&=\phi\circ i\circ(\xi+\psi)(x)=\phi\circ i(\xi(x)+\psi(x))\\
&=\phi\circ( i(\xi(x))+i(\psi(x))), \ {\rm as} \  i\ {\rm is \ an \ homomorphism,}\\
&=\phi( i(\xi(x)))+i(\xi(x))\cdot\phi(i(\psi(x)))\\
&=\phi( i(\xi(x)))+\phi(i(\psi(x)))=\phi\diamond\xi(x)+\phi\diamond\psi(x),
\end{align*}
as $\xi(x)\in N$, and the action of $N$ on itself is trivial.
\end{proof}
\subsection{Endomorphisms and crossed homomorphisms}
The following well-known and straightforward lemma is a main tool in translating problems relating to endomorphisms and automorphisms of group extensions to the framework of group cohomology. Thanks to this, we can use the results of group cohomology to study endomorphisms and automorphisms of group extensions. For convenience's sake, we present the proof here.
\begin{Le}\label{iso}
Let $G$ be a group and $\alpha:G\to G$ be a map such that $\alpha(x)=\phi(x)+ x$ for $\phi:G\to G$. Then $\alpha$ is a group homomorphism if and only if $\phi \in Z^1(G,G)$, where $G$ acts on itself via conjugation. In particular there is a bijection $End(G)\cong Z^1(G,G)$.
\end{Le}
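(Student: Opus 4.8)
The plan is to translate the homomorphism condition on $\alpha$ directly into the crossed-homomorphism condition on $\phi$, taking care that $G$ is written additively but need not be abelian, so that in any expression one may only cancel terms from the two ends, never from the middle. First I would fix the relevant conventions: since $G$ acts on itself by conjugation, the action is $x\cdot g = x + g - x$, and hence a map $\phi:G\to G$ lies in $Z^1(G,G)$ exactly when $\phi(x+y) = \phi(x) + x\cdot\phi(y) = \phi(x) + x + \phi(y) - x$ for all $x,y\in G$.

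Next I would substitute $\alpha(x) = \phi(x) + x$ into both sides of the homomorphism identity. On the one hand $\alpha(x+y) = \phi(x+y) + x + y$, and on the other hand $\alpha(x) + \alpha(y) = \phi(x) + x + \phi(y) + y$. Thus $\alpha$ is a homomorphism if and only if $\phi(x+y) + x + y = \phi(x) + x + \phi(y) + y$. Cancelling the trailing $y$ from both ends and then adding $-x$ on the right gives $\phi(x+y) = \phi(x) + x + \phi(y) - x = \phi(x) + x\cdot\phi(y)$, which is precisely the cocycle identity. Every step here is reversible, so the homomorphism condition and the cocycle condition are genuinely equivalent.

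For the final bijection I would observe that the assignment $\phi \mapsto \alpha$ with $\alpha(x) = \phi(x) + x$ has the evident inverse $\alpha \mapsto \phi$ with $\phi(x) = \alpha(x) - x$, these being mutually inverse as maps of sets of functions $G\to G$. The equivalence just established shows that $\phi \in Z^1(G,G)$ holds if and only if $\alpha \in End(G)$, so the two assignments restrict to mutually inverse maps between $End(G)$ and $Z^1(G,G)$, yielding the claimed bijection $End(G) \cong Z^1(G,G)$.

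I expect no deep obstacle here; the one point demanding care is the non-abelian bookkeeping. Because subtraction does not commute past intervening terms, the cancellations in the middle step must be performed strictly from the ends of each expression, and it is worth noting in passing that $\phi(0) = 0$ follows automatically from the cocycle identity, matching the requirement $\alpha(0) = 0$ for a homomorphism.
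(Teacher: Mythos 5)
Your proof is correct and takes essentially the same route as the paper's: substitute $\alpha(x)=\phi(x)+x$ into the homomorphism identity and cancel from the ends (valid even non-abelianly) to arrive at the cocycle condition $\phi(x+y)=\phi(x)+x+\phi(y)-x$, noting the steps are reversible. The only difference is that you explicitly verify the mutually inverse assignments $\phi\mapsto\alpha$ and $\alpha\mapsto\phi(x)=\alpha(x)-x$ for the final bijection, which the paper leaves implicit.
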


\begin{proof}
Let $x,y\in G$. We have $\alpha$ is a homomorphism if and only if
$$\phi(x+y)+x+y=\alpha(x+y)=\alpha(x)+\alpha(y)=\phi(x)+x+\phi(y)+y.$$
Therefore, we have
$$\phi(x+y)=\phi(x)+x+\phi(y)-x.$$
As this is the cocycle condition for $\phi$, we see that $\phi \in Z^1(G,G)$.
\end{proof}

Recall that $End^{N,Q}(G)=Aut^{N,Q}(G)$ is a group with respect to composition.
\begin{Le}\label{Q,N}
We have an isomorphism of groups between $End^{N,Q}(G)=Aut^{N,Q}(G)$ and $Z^1(Q,N)$.
\end{Le}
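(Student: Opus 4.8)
The plan is to realise $End^{N,Q}(G)$ inside $Z^1(G,N)$ by means of Lemma \ref{iso}, read off its two defining conditions as cocycle conditions, and then transport the group law. First I would use Lemma \ref{iso} to write any $\alpha\in End(G)$ uniquely as $\alpha(x)=\phi(x)+x$ with $\phi\in Z^1(G,G)$. I would then translate the two requirements defining $End^{N,Q}(G)$. The condition that $\alpha$ induces the identity on $Q$ reads $p(\phi(x))+p(x)=p(x)$, i.e. $\phi(x)\in N$ for all $x$; since $N$ is normal and hence a $G$-submodule, this says exactly that $\phi\in Z^1(G,N)$. The condition that $\alpha$ centralises $N$ reads $\phi(i(n))+i(n)=i(n)$, i.e. $\phi\circ i=0$, so $\phi$ vanishes on $N$.

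Next I would show that vanishing on $N$ is the same as factoring through $p$. Because $N$ acts trivially on itself, the cocycle identity gives $\phi(n+x)=\phi(n)+n\cdot\phi(x)=\phi(x)$ for $n\in N$, so $\phi$ is constant on the cosets of $N$ and descends to a map $\bar\phi:Q\to N$ with $\phi=\bar\phi\circ p$. Since the conjugation action of $G$ on $N$ factors through $Q$, the cocycle identity for $\phi$ becomes the cocycle identity for $\bar\phi$, whence $\bar\phi\in Z^1(Q,N)$; conversely inflation $\psi\mapsto\psi\circ p$ sends $Z^1(Q,N)$ back to cocycles $G\to N$ vanishing on $N$. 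This produces a bijection $End^{N,Q}(G)\cong Z^1(Q,N)$.

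Finally I would verify that this bijection is an isomorphism of groups, where $End^{N,Q}(G)$ carries composition and $Z^1(Q,N)$ carries pointwise addition (an abelian group since $N$ is abelian). Writing $\alpha_j(x)=\phi_j(x)+x$, the composite is $\alpha_1\circ\alpha_2(x)=\phi_1(\phi_2(x)+x)+\phi_2(x)+x$. The main point, and the only place requiring care, is to combine $\phi_2(x)\in N$ with the relation $\phi_1(n+x)=\phi_1(x)$ established above to conclude $\phi_1(\phi_2(x)+x)=\phi_1(x)$, so that $\alpha_1\circ\alpha_2(x)=(\phi_1+\phi_2)(x)+x$. Thus composition corresponds to addition of cocycles, finishing the proof. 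I expect this last computation to be the principal obstacle, precisely because it is the step where one must exploit both that $\phi_1$ vanishes on $N$ and that $N$ is normal; note also that it simultaneously records the statement, needed for Theorem \ref{th1}(2), that the induced additive structure on $End^{N,Q}(G)$ is given by composition.
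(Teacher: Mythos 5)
Your proof is correct, but it is worth pointing out that the paper does not actually prove this lemma at all: it declares it a well-known fact and cites p.~192 of \cite{wells} and p.~66 of \cite{rob}. Your argument therefore supplies a self-contained proof, and it does so in exactly the style of the paper's neighbouring lemmas: like the paper's proof of Lemma \ref{G,N}, you write $\alpha(x)=\phi(x)+x$ and invoke Lemma \ref{iso}, and the translation of ``induces the identity on $Q$'' into $\phi(x)\in N$ is word-for-word the paper's argument there. The genuinely additional content you provide is the identification of the second condition, $\phi\circ i=0$, with membership in the inflated subgroup: your observation that $\phi(n+x)=\phi(x)$ forces $\phi=\bar\phi\circ p$ with $\bar\phi\in Z^1(Q,N)$ realises $End^{N,Q}(G)$ as the image of $Inf:Z^1(Q,N)\to Z^1(G,N)$, which is precisely how the lemma is used later in the proof of Theorem \ref{th1}(3). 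Your final computation $\alpha_1\circ\alpha_2(x)=(\phi_1+\phi_2)(x)+x$ buys several things at once: it gives the asserted group isomorphism; it proves the claim of Theorem \ref{th1}(2) (which the paper dismisses as trivial) that the induced additive structure on $End^{N,Q}(G)$ is composition; it reproves the paper's corollary that $Aut^{N,Q}(G)$ is abelian; and, since a monoid isomorphic to a group is a group, it shows $End^{N,Q}(G)=Aut^{N,Q}(G)$ without the 5-Lemma argument the paper uses for that equality.
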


This is a well-known fact, and a proof can be found on p. 192 of \cite{wells} or p. 66 of \cite{rob}.

We can use the fact that $Z^1(G,N)$ is abelian to deduce:
\begin{Co}
$Aut^{N,Q}(G)$ is a commutative group under composition.
\end{Co}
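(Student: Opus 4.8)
The plan is to transport commutativity across the group isomorphism supplied by Lemma \ref{Q,N}. That lemma identifies $Aut^{N,Q}(G)$, equipped with composition, with the group $Z^1(Q,N)$ of crossed homomorphisms $Q\to N$. First I would recall from the notational conventions that, because $N$ is abelian, $Z^1(Q,N)$ is an abelian group under pointwise addition: the pointwise sum of two crossed homomorphisms is again a crossed homomorphism, and since addition in $N$ is commutative, this sum is visibly symmetric in its two arguments.

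Next I would invoke the elementary fact that commutativity is preserved by group isomorphisms. Since Lemma \ref{Q,N} asserts an isomorphism \emph{of groups}, and its target $Z^1(Q,N)$ is abelian, the source $Aut^{N,Q}(G)$ must be abelian as well. Spelled out, if $\alpha_1,\alpha_2\in Aut^{N,Q}(G)$ correspond to cocycles $c_1,c_2\in Z^1(Q,N)$, then $\alpha_1\circ\alpha_2$ and $\alpha_2\circ\alpha_1$ correspond to $c_1+c_2$ and $c_2+c_1$ respectively; these agree, so the two composites agree.

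The only point deserving attention — and it is bookkeeping rather than a genuine obstacle — is to confirm that the group law which Lemma \ref{Q,N} converts into pointwise addition really is the composition law on $Aut^{N,Q}(G)$, and not some other binary operation lurking on these endomorphisms. Once this is read off from the statement of Lemma \ref{Q,N}, no further argument is required and the corollary follows at once.
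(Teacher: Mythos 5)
Your proposal is correct and is exactly the paper's argument: the paper deduces the corollary from Lemma \ref{Q,N}, transporting the abelian group structure of $Z^1(Q,N)$ (pointwise addition, abelian since $N$ is) across the group isomorphism with $(Aut^{N,Q}(G),\circ)$. Your "bookkeeping" point is settled by the remark immediately preceding Lemma \ref{Q,N}, which fixes composition as the group law on $End^{N,Q}(G)=Aut^{N,Q}(G)$.
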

\begin{Le}\label{G,N}
There is a bijection between $End^Q_N(G)$ and $Z^1(G,N)$. Therefore we have ring structure on $End^Q_N(G)$.
\end{Le}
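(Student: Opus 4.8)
The plan is to obtain the bijection as a restriction of the bijection $End(G)\cong Z^1(G,G)$ furnished by Lemma \ref{iso}, which sends an endomorphism $\alpha$ to the map $\phi(x)=\alpha(x)-x$ (with inverse $\phi\mapsto\alpha$, $\alpha(x)=\phi(x)+x$) and uses the conjugation action of $G$ on itself. Concretely, I would prove that $\alpha$ lies in $End^Q_N(G)$ if and only if the associated cocycle $\phi$ takes all its values in $N$, i.e. if and only if $\phi\in Z^1(G,N)\subseteq Z^1(G,G)$.

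For the forward implication, let $\alpha\in End^Q_N(G)$ and set $\phi(x)=\alpha(x)-x$. Since $\alpha$ induces the identity on $Q$, the elements $\alpha(x)$ and $x$ have the same image under $p$, so $\phi(x)=\alpha(x)-x\in\ker p=i(N)$ for every $x$; thus $\phi$ is a map $G\to N$. By Lemma \ref{iso} it satisfies the $1$-cocycle identity for the conjugation action of $G$ on $G$, and because its image lies in the normal subgroup $N$ — on which the $G$-action is by definition the restriction of that conjugation action — the identical relation is exactly the cocycle condition defining $Z^1(G,N)$. Hence $\phi\in Z^1(G,N)$. (Incidentally $p\circ\alpha=p$ already forces $\alpha(N)\subseteq N$, since $n\in\ker p$ yields $\alpha(n)\in\ker p$, so the two conditions naming $End^Q_N(G)$ are compatible.)

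Conversely, given $\phi\in Z^1(G,N)$, view it as an element of $Z^1(G,G)$ and put $\alpha(x)=\phi(x)+x$; Lemma \ref{iso} guarantees $\alpha\in End(G)$. As $\phi(x)\in N=\ker p$, the elements $\alpha(x)$ and $x$ have the same image under $p$, so $p\circ\alpha=p$ and therefore $\alpha(N)\subseteq N$, giving $\alpha\in End^Q_N(G)$. These two assignments are the restriction of the mutually inverse bijections of Lemma \ref{iso}, hence are themselves mutually inverse, establishing $End^Q_N(G)\cong Z^1(G,N)$. Transporting the ring structure of Lemma \ref{diamondring} along this bijection then yields the asserted ring structure on $End^Q_N(G)$.

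The only delicate point is the identification in the first paragraph, namely that a $Z^1(G,G)$-cocycle with image in $N$ is the same datum as a $Z^1(G,N)$-cocycle; this is not a genuine obstacle, as it is immediate from the normality of $N$ and from the fact that the $Q$- (hence $G$-) action on $N$ is the restriction of conjugation. I would defer the verification that the transported addition and multiplication coincide with the explicit formulas $\boxplus$ and $\boxtimes$ to the proof of Theorem \ref{th1}, since Lemma \ref{G,N} asserts only the existence of a ring structure.
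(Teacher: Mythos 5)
Your proof is correct and follows essentially the same route as the paper: both apply Lemma \ref{iso} to write $\alpha(x)=\psi(x)+x$, use $p\circ\alpha=p$ to show $\psi$ takes values in $N=\ker p$, and transport the ring structure of Lemma \ref{diamondring} across the resulting bijection. Your write-up is in fact more complete than the paper's, which only spells out the forward direction and leaves the converse and the mutual-inverse check implicit.
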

\begin{proof}
Let $\alpha \in End^Q_N(G)$. We can write $\alpha(x)=\psi(x)+x$ for $x\in G$ and some map $\psi$ which we will now show is an element of $Z^1(G,N)$, thus defining the required bijection. By Lemma \ref{iso}  $\psi$ is a crossed homomorphism. Next, we check that $\psi(x)\in N$. Let us denote $p(x)$ by $\bar x$. Since $p\alpha =p$,  we have $p(\alpha(x))=p(\psi(x)+x)=p(x)$. From this it follows that $\bar \psi(x)=0$, and therefore $\psi(x)\in N$.
\end{proof}
\subsection{Transporting the ring structure}\label{transport} 
Now let us look at what happens to the two operations in $Z^1(G,N)$ under the bijection established in Lemma \ref{G,N}. First, we take a look at pointwise addition. For $\alpha_1, \alpha_2\in End^{N,Q}(G)$ and $\psi_1,\psi_2,\psi\in Z^1(Q,N)$ such that $\alpha_1(x)=\psi_1(x)+x$, $\alpha_2(x)=\psi_2(x)+x$ and $\psi(x)=\psi_1(x)+\psi_2(x)$, we want to define an operation
$$\boxplus:End^Q_N(G)\times End^Q_N(G)\to End^Q_N(G)$$
such that there exists some $\alpha\in End^Q_N(G)$ with
$$\alpha(x)=(\alpha_1\boxplus \alpha_2)(x)=\psi(x)+x.$$
Let us take a look at $\psi(x)$.
$$\psi(x)=\psi_1(x)+\psi_2(x)=\alpha_1(x)-x+\alpha_2(x)-x.$$
So we define
$$(\alpha_1\boxplus \alpha_2)(x):=\alpha_1(x)-x+\alpha_2(x)-x+x=\alpha_1(x)-x+\alpha_2(x).$$
For this operation, it is easy to check that the neutral element is $\alpha(x)=x$, and the inverse element for $\alpha$ is $-\alpha(x):=x-\alpha(x)+x.$

In the same manner we wish to define a second operation
$$\boxtimes :End^Q_N(G)\times End^Q_N(G)\to End^Q_N(G)$$
such that for $\alpha(x)=(\alpha_2\boxtimes \alpha_1)(x)$ and $\psi( x)= \psi_2\diamond \psi_1(x)$ we have $\alpha(x)=\psi(x)+x$.
Let us look at $\alpha_2\circ \alpha_1(x)$. Composition is defined in $End^Q_N(G)$, though we do not have inverses in general, giving $End^Q_N(G)$ a monoidal structure.
\begin{align*}
\alpha_2\circ \alpha_1(x)&= \alpha_2(\psi_1(x)+x)=\psi_2(\psi_1(x)+x)+\psi_1(x)+x\\
&=\psi_2(\psi_1(x))+\psi_1(x)+\psi_2(x)-\psi_1(x)+\psi_1(x)+x\\
&=\psi_2\diamond \psi_1(x)+\psi_2(x)+\psi_1(x)+x.
\end{align*}
From this we can work backwards to get
\begin{align*}
\alpha_2\circ \alpha_1(x)&=\psi_2\diamond \psi_1(x)+\psi_2(x)+\alpha_1(x)\\
\alpha_2\circ \alpha_1(x)-\alpha_1(x)&=\psi_2\diamond \psi_1(x)+\psi_2(x)+x-x\\
\alpha_2\circ \alpha_1(x)-\alpha_1(x)+x&=\psi_2\diamond \psi_1(x)+\alpha_2(x)\\
\alpha_2\circ \alpha_1(x)-\alpha_1(x)+x-\alpha_2(x)&=\psi_2\diamond \psi_1(x).\\
\end{align*}
So we have
$$\alpha(x)=(\alpha_2\boxtimes \alpha_1)(x)=\alpha_2\circ \alpha_1(x)-\alpha_1(x)+x-\alpha_2(x)+x=\psi_2\diamond \psi_1(x)+x.$$

\subsection{Proof of Theorem \ref{th1}}
Part (1) of Theorem \ref{th1} follows from the calculations in Section \ref{transport} and Lemma \ref{diamondring}.

Part (2) is trivial.  To show part (3) let us observe that we have the following exact sequence:
$$0\to Z^1(Q,N)\xrightarrow{Inf} Z^1(G,N)\xrightarrow{Res} End_{Q}(N)\to H^2(Q,N)\to H^2(G,N)$$
thanks to \cite[Exact sequence 8.1 on p. 202]{ht}). Now the result follows from  Lemmas \ref{Q,N} and \ref{G,N}.

\subsection{Operation $\ast$ and the quasi-regular elements in rings}
Let $R$ be a ring. We define an operation $\ast:R\times R\to R$ by
$$r\ast s:=r+s+rs.$$
Following \cite{kap} we define an element $r\in R$ to be \emph{quasi-regular} if there exists some element $s\in R$ with $r\ast s=0=s\ast r$. Let $QR(R)$ denote the set of all such elements of $R$. Then $QR(R)$ is a group under $\ast$ with neutral element $0$. For $r\in QR(R)$, the inverse element is such an $s$ as given in the definition of a quasi-regular element. Moreover, if $R$ is a unital ring and $U(R)$ denotes the set of invertible elements of $R$, we have a group isomorphism $\phi:QR(R)\to U(R)$ given by $\phi(r)=1+r$ (see  \cite{kap}).

An ideal $I$ of a ring $R$ is a \emph{square zero ideal} if for all $a_1,a_2\in I$ one has $a_1a_2=0$.

\begin{Pro}\label{QR}
Let $0\rightarrow I\xrightarrow{i}R\xrightarrow{p}S\rightarrow 0$ be a short exact sequence such that $R$ and $S$ are rings, $p$ is a ring homomorphism and $I$ is the square zero ideal. Then
\begin{enumerate}
\item for $r\in R$ with $p(r)\in QR(S)$, we have $r\in QR(R)$ and
\item the sequence $0\rightarrow I\xrightarrow{i_2}QR(R)\xrightarrow{p_2}QR(S)\rightarrow 0$ is also exact.
\end{enumerate}
\end{Pro}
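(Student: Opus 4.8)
The plan is to exploit throughout that the circle operation $\ast$ is associative on all of $R$ with two-sided neutral element $0$ --- a direct identity, since $(r\ast s)\ast t$ and $r\ast(s\ast t)$ both expand to $r+s+t+rs+rt+st+rst$ --- and that the ring homomorphism $p$ is automatically a homomorphism for $\ast$, because $p(r\ast s)=p(r+s+rs)=p(r)+p(s)+p(r)p(s)=p(r)\ast p(s)$ and $p(0)=0$. I would record first the behaviour of the square-zero ideal: for $a,b\in I$ one has $ab=0$, so $a\ast b=a+b$ and $a\ast(-a)=-a^2=0=(-a)\ast a$. Hence $I\subseteq QR(R)$, every element of $I$ has $\ast$-inverse its additive negative, and $(I,\ast)$ coincides with the additive group $(I,+)$. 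This already makes the inclusion $i_2:(I,+)\to(QR(R),\ast)$ a group homomorphism.

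For part (1), given $r\in R$ with $p(r)\in QR(S)$, I would pick a $\ast$-inverse $\bar s$ of $p(r)$ in $S$ and lift it to some $s_0\in R$ with $p(s_0)=\bar s$. Applying the $\ast$-homomorphism $p$ gives $p(r\ast s_0)=p(r)\ast\bar s=0$ and $p(s_0\ast r)=0$, so that $a:=r\ast s_0$ and $b:=s_0\ast r$ both lie in $\ker p=I$. The idea is then to correct the lift by its error term: set $s:=s_0\ast(-a)$ and $s':=(-b)\ast s_0$. Using associativity of $\ast$ and the fact that $-a,-b$ are the $\ast$-inverses of $a,b\in I$, I get $r\ast s=(r\ast s_0)\ast(-a)=a\ast(-a)=0$ and $s'\ast r=(-b)\ast(s_0\ast r)=(-b)\ast b=0$. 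Thus $r$ has a right and a left $\ast$-inverse; the standard monoid computation $s'=s'\ast 0=s'\ast(r\ast s)=(s'\ast r)\ast s=0\ast s=s$ shows they agree, giving a two-sided $\ast$-inverse, and hence $r\in QR(R)$.

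Part (2) is then essentially formal bookkeeping. The map $p_2$ is the restriction of $p$, which is well defined into $QR(S)$ --- if $r\ast s=0=s\ast r$ then applying $p$ shows $p(s)$ is a $\ast$-inverse of $p(r)$ --- and is a group homomorphism since $p$ respects $\ast$; surjectivity of $p_2$ is exactly part (1), because any $\bar r\in QR(S)$ lifts to some $r_0\in R$ with $p(r_0)=\bar r\in QR(S)$, whence $r_0\in QR(R)$ with $p_2(r_0)=\bar r$. Injectivity of $i_2$ is inherited from injectivity of $i$. For exactness at $QR(R)$, the inclusion $\operatorname{im} i_2\subseteq\ker p_2$ holds because $p$ vanishes on $I$, while conversely any $r\in QR(R)$ with $p_2(r)=0$ lies in $\ker p=I$, so $r\in\operatorname{im} i_2$.

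I expect the only real content to lie in part (1): the correction-of-the-lift step, where the square-zero hypothesis is precisely what guarantees that the error terms $a$ and $b$ are $\ast$-invertible with an explicit inverse, together with the promotion of the resulting one-sided $\ast$-inverses to a genuine two-sided one. Everything else reduces to the two structural observations above --- that $p$ is a homomorphism for $\ast$ and that $(I,\ast)=(I,+)$ sits inside $QR(R)$ --- so I would present those two facts cleanly at the start and let parts (1) and (2) follow.
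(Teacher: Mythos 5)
Your proof is correct, and its core is the same as the paper's: lift a quasi-inverse of $p(r)$ to $s_0\in R$, observe that the error term $a=r\ast s_0$ lands in $I$, and correct the lift to $s_0\ast(-a)$, using the square-zero hypothesis to make $-a$ the $\ast$-inverse of $a$; part (2) is then formal, with surjectivity of $p_2$ coming from part (1). The one genuine difference is that you are more careful than the paper: since quasi-regularity demands a \emph{two-sided} inverse ($r\ast s=0=s\ast r$), the paper's proof, which only exhibits $z$ with $r\ast z=0$, is strictly speaking incomplete, whereas your symmetric construction of a left inverse $s'=(-b)\ast s_0$ together with the standard monoid argument $s'=s'\ast(r\ast s)=(s'\ast r)\ast s=s$ closes exactly that gap. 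Your preliminary observations (associativity of $\ast$, $p$ a $\ast$-homomorphism, $(I,\ast)=(I,+)\subseteq QR(R)$) are also the right way to make the "obvious" part (2) genuinely routine, in particular why $i_2$ is a homomorphism from the additive group $I$ into $(QR(R),\ast)$.
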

\begin{proof}
Let $p(r)=s\in QR(S)$. Then there exists an $\bar s\in QR(s)$ with $s\ast\bar s=0$. As $p$ is surjective, we must have an $\bar r\in R$ such that $p(\bar r)=\bar s$. We also have that $p(r\ast\bar r)=s\ast\bar s$ and therefore $r\ast\bar r\in I$. Suppose $r\ast\bar r=q\in I$. Then let $z = \bar r\ast (-q)$. We have $p(z)=p(\bar r)=\bar s$ and
$$r\ast z=r\ast (\bar r\ast(-q))=(r\ast \bar r)\ast (-q)=q\ast(-q)=0,$$
as $q\in I$, showing that $r\in QR(R)$, as we wanted.

The second part is obvious. The surjectivity of $p_2$, which is perhaps least obvious, follows from the first part.
\end{proof}

\begin{Le}
We have $QR(End_Q(N))\simeq Aut_Q(N)$.
\end{Le}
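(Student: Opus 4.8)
The plan is to reduce the statement to the general isomorphism $QR(R)\cong U(R)$ for a unital ring $R$ recorded just above, applied to $R=End_Q(N)$. First I would observe that $End_Q(N)$, with pointwise addition as $+$ and composition as multiplication, is a \emph{unital} ring whose identity is $id_N$. This is immediate from the definitions; the only point worth noting is that $id_N$ is itself a $Q$-module endomorphism, so it genuinely lies in $End_Q(N)$ and serves as the multiplicative unit.

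Next I would identify the unit group $U(End_Q(N))$ with $Aut_Q(N)$. An element $f\in End_Q(N)$ is a unit precisely when there is $g\in End_Q(N)$ with $f\circ g=id_N=g\circ f$, i.e. when $f$ admits a two-sided inverse under composition, which happens exactly when $f$ is bijective. The one verification needed is that the set-theoretic inverse $f^{-1}$ of a bijective $Q$-linear map is again $Q$-linear, so that $f^{-1}$ really belongs to $End_Q(N)$: for $n,m\in N$ and $x\in Q$ the identities $f^{-1}(n+m)=f^{-1}(n)+f^{-1}(m)$ and $f^{-1}(x\cdot n)=x\cdot f^{-1}(n)$ follow by applying the $Q$-linear $f$ to both sides and using injectivity. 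Hence $U(End_Q(N))=Aut_Q(N)$.

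Finally I would invoke the group isomorphism $\phi:QR(R)\to U(R)$, $\phi(r)=1+r$, stated above for unital rings, with $R=End_Q(N)$ and $1=id_N$. Combining with the previous step yields a group isomorphism $QR(End_Q(N))\xrightarrow{\ \cong\ } U(End_Q(N))=Aut_Q(N)$, explicitly $f\mapsto id_N+f$, where the source carries the operation $\ast$ and the target carries composition. I expect no genuine obstacle here: all the real content sits in the general fact about quasi-regular and invertible elements in a unital ring, and the only step specific to this lemma is the routine identification $U(End_Q(N))=Aut_Q(N)$.
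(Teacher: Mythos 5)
Your proposal is correct and is exactly the paper's argument: the paper also dismisses this as trivial, since $End_Q(N)$ is a unital ring and quasi-regular elements correspond bijectively to units via $r\mapsto 1+r$, with $U(End_Q(N))=Aut_Q(N)$. Your write-up merely fills in the routine verifications (unitality and $Q$-linearity of inverses) that the paper leaves implicit.
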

\begin{proof}
Trivial, as for a unital ring, there is a bijection between the quasi-regular and invertible elements.
\end{proof}
\begin{Le} The  operation $\ast$ for the ring $End_N^Q(G)$ is the same as $\circ$. In particular
we have 
$QR(End_N^Q(G))\simeq Aut_N^Q(G)$.
\end{Le}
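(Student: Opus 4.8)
The plan is to transport everything along the bijection $End^Q_N(G)\cong Z^1(G,N)$ of Lemma \ref{G,N} and reduce the statement to a clean identity in $Z^1(G,N)$. Writing $\alpha(x)=\psi(x)+x$ and $\beta(x)=\chi(x)+x$, recall from Section \ref{transport} that under this bijection the ring addition $\boxplus$ corresponds to the pointwise sum $+$, the ring multiplication $\boxtimes$ corresponds to $\diamond$, and the zero element $id_G$ corresponds to $0\in Z^1(G,N)$. Consequently the quasi-regularity operation $\ast$, defined in any ring by $r\ast s=r+s+rs$, becomes $\psi\ast\chi=\psi+\chi+\psi\diamond\chi$ in $Z^1(G,N)$.

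The core of the argument is to show that composition of endomorphisms corresponds precisely to $\ast$, i.e. that $\alpha\circ\beta(x)=(\psi\ast\chi)(x)+x$. First I would expand $\alpha\circ\beta(x)=\alpha(i\chi(x)+x)=\psi(i\chi(x)+x)+i\chi(x)+x$. The decisive step is to apply the cocycle condition for $\psi\in Z^1(G,N)$ to the argument $i\chi(x)+x$, which yields $\psi(i\chi(x)+x)=\psi(i\chi(x))+i\chi(x)\cdot\psi(x)$. Here $i\chi(x)\in N$, so because $N$ is abelian and hence acts trivially on itself (the same observation underlying Lemma \ref{Res}) the second summand collapses to $\psi(x)$, while the first summand is exactly $\psi\diamond\chi(x)$. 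Thus $\psi(i\chi(x)+x)=\psi\diamond\chi(x)+\psi(x)$, and since $\psi(x),\chi(x),\psi\diamond\chi(x)$ all lie in the abelian group $N$ they commute freely, giving $\alpha\circ\beta(x)=\psi\diamond\chi(x)+\psi(x)+\chi(x)+x=(\psi\ast\chi)(x)+x$. This establishes $\ast=\circ$. (Alternatively one can verify the identity directly in $End^Q_N(G)$ by substituting the defining formulas for $\boxplus$ and $\boxtimes$ into $\alpha\ast\beta=\alpha\boxplus\beta\boxplus(\alpha\boxtimes\beta)$ and cancelling; the same two facts, the cocycle identity and the commutativity of $N$, drive the cancellation.)

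For the final assertion, recall that $QR(R)$ consists of the elements possessing a two-sided inverse under $\ast$ relative to the ring's zero. Since $\ast=\circ$ and the zero is $id_G$, an element $\alpha\in End^Q_N(G)$ is quasi-regular exactly when it admits a two-sided composition inverse lying in $End^Q_N(G)$; such an $\alpha$ is a bijective endomorphism of $G$ normalising $N$ whose inverse also normalises $N$, hence an element of $Aut^Q_N(G)$, and conversely. This identifies $QR(End^Q_N(G))$ with $Aut^Q_N(G)$ as groups.

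The step I expect to be the main obstacle is the cocycle computation: one must be careful about which action is in play, namely the $G$-action on $N$ appearing in the cocycle condition versus the self-action of $N$, and one must track the non-commutativity of $G$ against the commutativity of $N$ so as to justify the reordering of the $N$-valued terms. Everything else is bookkeeping once this identity is secured.
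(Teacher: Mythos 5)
Your proof is correct, but its emphasis is inverted relative to the paper's. The paper proves the lemma by a purely formal rearrangement inside $End^Q_N(G)$: by the definition of $\boxtimes$ one has $f\circ g=(f\boxtimes g)\boxplus(f\boxplus g)=f\ast g$, with no further computation needed --- precisely the route you mention only as a parenthetical alternative. Your main argument instead transports everything to $Z^1(G,N)$ and establishes $\alpha\circ\beta(x)=\psi\diamond\chi(x)+\psi(x)+\chi(x)+x$ via the cocycle condition and the triviality of the $N$-action on itself; this is exactly the computation the paper already carried out in Section \ref{transport} when deriving (indeed, designing) the formula for $\boxtimes$, so your main line re-derives groundwork that the paper's proof simply cites. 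The two routes are thus the same mathematics packaged differently: the paper's version buys brevity by exploiting that $\boxtimes$ was defined so as to make $\circ$ expressible in ring terms, while yours is self-contained and makes visible where the cocycle identity and the commutativity of $N$ enter. On the final assertion, you and the paper gloss the same small point: to identify $QR(End^Q_N(G))$ with $Aut^Q_N(G)$ one must know that the inverse of an automorphism $\alpha\in Aut^Q_N(G)$ again lies in $End^Q_N(G)$; this follows since $p\circ\alpha=p$ forces $p\circ\alpha^{-1}=p$, whence $\alpha^{-1}(N)\subset \ker p=N$. That is a one-line check, not a gap, but it is worth stating since quasi-regularity demands a two-sided inverse inside the ring.
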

\begin{proof}  Take $f,g\in End_N^Q(G)$. Then by definition of $f\boxtimes g$ we have  
\begin{align*}  f\circ g&=f\boxtimes g -id+f-id+g\\
&=f\boxtimes g -id +f\boxplus g\\
&=(f\boxtimes g)\boxplus(f\boxplus g)\\
&=f\ast g.
\end{align*}
Since $id$ is the zero element in the ring $End_N^Q(G)$, we see that $f$ is regular if and only if there exists a $g$ such that $f\circ g=id$. 
\end{proof}
\subsection{Proof of Corollary \ref{co1}}
The exact sequence in Theorem \ref{th1} gives rise to an extension of rings
$$0\to End^{N,Q}(G)\to End^Q_N(G)\to Im(\rho)\to 0$$
and a monomorphism of rings $Im(\rho)\to End_Q(N)$. We can, therefore, use Proposition \ref{QR} to get that
\begin{equation}\label{aut}
0\to Aut^{N,Q}(G)\to Aut^Q_N(G)\to QR(Im(Res))\to 0
\end{equation}
is exact as well. We also have a monomorphism of groups $QR(Im(Res))\to QR(End_Q(N))=Aut_Q(N)$. Hence it is clear that in the commutative diagram below
$$\xymatrix{0\ar[r]&End^{N,Q}(G)\ar@{=}[d]\ar[r]&End^Q_N(G)\ar[r]^{Res}&End_Q(N)\ar[r]^{\phi}&H^2(Q,N)\ar@{=}[d]\ar[r] &\\
0\ar[r]&Aut^{N,Q}(G)\ar[r]&Aut^Q_N(G)\ar@{^{(}->}[u]\ar[r]^{R}&Aut_Q(N)\ar@{^{(}->}[u]_{p}\ar[r]^{\psi}&H^2(Q,N)}$$
the bottom row is exact except possibly at the place $Aut_Q(N)$. We want to show that $Im \ R=Ker \ \psi$. First, let $f\in Im \ R$. We have that $p(f)\in Im \ Res$ and $\phi(p(f))=0$. But then as the diagram commutes, we must have $f\in Ker \ \psi$.

Now, let $f\in Ker \ \psi$. Then $p(f)\in Ker \ \phi$ and so there exists a $g\in End^Q_N(G)$ such that $Res(g)=p(f).$ So $g$ maps to an element of $QR(End_Q(N))$. But then according to the first part of Proposition \ref{QR}, this implies that $g\in QR(End^Q_N(G))=Aut^Q_N(G)$ and so $f\in Im \ R$.

\section{Proofs of Theorem \ref{th2} and Corollary \ref{co2}}
\subsection{Endomorphisms and crossed homomorphism again}
\begin{Le}\label{15}
There is a bijection between $End^N(G)$ and $Z^1(Q,C_G(N))$, where $C_G(N)$ denotes the centraliser of $N$ in $G$.
\end{Le}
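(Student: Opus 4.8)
The plan is to combine Lemma \ref{iso} with a careful analysis of what centralising $N$ imposes on the associated crossed homomorphism. By Lemma \ref{iso}, every endomorphism $\alpha$ of $G$ can be written uniquely as $\alpha(x)=\phi(x)+x$ with $\phi\in Z^1(G,G)$, where $G$ acts on itself by conjugation. First I would reinterpret membership in $End^N(G)$: the left-hand square of the defining diagram says exactly that $\alpha$ restricts to the identity on $N$, which in terms of $\phi$ reads $\phi(n)=0$ for all $n\in N$. I would then observe that the existence of the induced map $\bar\alpha$ on $Q$ is automatic once $\alpha|_N=id_N$: if $x$ and $y$ have the same image in $Q$, then $x=n+y$ for some $n\in N$, and $\alpha(x)=\alpha(n)+\alpha(y)=n+\alpha(y)$ lies in the same coset as $\alpha(y)$. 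Thus $End^N(G)$ corresponds precisely to those $\phi\in Z^1(G,G)$ that vanish on $N$.

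The heart of the argument is to show that such $\phi$ take values in $C_G(N)$ and factor through $p$. For the first claim, for $x\in G$ and $n\in N$ I would compute $\phi(n+x)$ in two ways: directly via the cocycle identity, $\phi(n+x)=\phi(n)+n\cdot\phi(x)=n+\phi(x)-n$; and by rewriting $n+x=x+n'$ with $n'=-x+n+x\in N$ (using normality of $N$) so that $\phi(n+x)=\phi(x+n')=\phi(x)+x\cdot\phi(n')=\phi(x)$. Comparing gives $n+\phi(x)-n=\phi(x)$, i.e. $\phi(x)$ commutes with every $n\in N$, so $\phi(x)\in C_G(N)$. The same computation yields $\phi(x+n)=\phi(x)$, showing $\phi$ is constant on the fibres of $p$ and hence descends to a map $\bar\phi:Q\to C_G(N)$.

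Next I would record that $C_G(N)$ is a normal subgroup of $G$ on which $N$ acts trivially by conjugation, so that the conjugation action of $G$ descends to a genuine $Q$-action on $C_G(N)$; both facts follow from short conjugation computations. With this action in hand, projecting the cocycle identity $\phi(x+y)=\phi(x)+x\cdot\phi(y)$ to $Q$ shows that $\bar\phi$ satisfies $\bar\phi(\bar x+\bar y)=\bar\phi(\bar x)+\bar x\cdot\bar\phi(\bar y)$, i.e. $\bar\phi\in Z^1(Q,C_G(N))$. Conversely, given $\psi\in Z^1(Q,C_G(N))$ I would set $\phi=\psi\circ p$ and check, again using that the $Q$-action on $C_G(N)$ is the descended conjugation, that $\phi\in Z^1(G,G)$ and $\phi|_N=0$; the assignments $\alpha\mapsto\bar\phi$ and $\psi\mapsto(x\mapsto\psi(p(x))+x)$ are then visibly mutually inverse, yielding the bijection.

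The main obstacle I anticipate is the middle step, establishing that $\phi$ lands in $C_G(N)$: it relies on juggling the non-abelian additive notation and on invoking normality of $N$ at just the right place. The surrounding bookkeeping—the automatic descent to $Q$, the normality of $C_G(N)$, and the verification that the two constructions are inverse—is routine once that commutation identity is in place.
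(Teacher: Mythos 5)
Your proof is correct and follows essentially the same route as the paper: both decompose $\alpha(x)=\phi(x)+x$ via Lemma \ref{iso}, establish the descended conjugation $Q$-action on $C_G(N)$, and show that the resulting cocycle takes values in $C_G(N)$ and factors through $p$ — your two-way computation of $\phi(n+x)$ is just a rearrangement of the paper's computation $n+g-n=\alpha(n+x)-(n+x)=\psi(\overline{n+x})=\psi(\bar x)$. You spell out the converse direction (building $\alpha$ from $\psi\in Z^1(Q,C_G(N))$) more explicitly than the paper does, but that is a difference of detail, not of method.
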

\begin{proof}
Let $\alpha \in End^N(G)$. Then we can write $\alpha(x)=\psi(x)+x$ for $x\in G$ and some map $\psi$ which we will now show is an element of $Z^1(Q,C_G(N))$, thus defining the required bijection.

First, we need to check that $C_G(N)$ is a $Q$-group, and therefore $Z^1(Q,C_G(N))$ is defined. For an $x\in G$, let us denote $p(x)$ by $\bar x$. Then for a $g\in C_G(N)$, $\bar x \cdot g=x+g-x$. This action is well-defined. It is easy to see that $\bar x\cdot g \in C_G(N)$ and for a $y\in G$ with $p(y)=\bar x$ we have $y+g-y=x+n+g-n-x=x+n-x$.

The cocycle condition follows from Lemma \ref{iso} and the fact that $\psi$ is actually defined can be proved the same way as in Lemma \ref{Q,N}. Next, we check that $\psi(\bar x)\in C_G(N)$. Let $g\in Im \ \psi$. So there is some $\bar x\in Q$ with $\psi(\bar x)=g$. Then for every $n\in N$, we have
\begin{align*} n+g-n&=n+\alpha(x)-x-n\\
&=\alpha(n+x)-(n+x) \ {\rm as} \  \alpha(n)=n,\\
&=\psi(\overline{n+x})=\psi(\bar x).
\end{align*}
So $n+g-n=g$ and therefore $g\in C_G(N)$.
\end{proof}

We denote by $\bar Q$ the quotient $C_G(N)/N$ which is part of the central extension
$$0\to N\to C_G(N)\to \bar Q\to 1.$$
We will now show that $\bar Q$ is the set of all elements of $Q$ whose action on $N$ is trivial. 
The following diagram
$$\xymatrix{0\ar[r]&N\ar@{=}[d]\ar[r]&G\ar[r]&Q\ar[r]&1\\
0\ar[r]&N\ar[r] &C_G(N)\ar@{^{(}->}[u]_{j}\ar[r] &{\bar Q}\ar[u]_{\bar j}\ar[r] &1}$$
shows that $\bar{j}$ is a monomorphism. So $\bar Q\subset Q$. 
Next, we show that all $x\in Q$ with $x\cdot n=n$ for every $n\in N$ are elements of $\bar Q$. Let such an $x$ be given. Then for a $g\in G$ with $p(g)=x$, we have $x\cdot n=g+n-g=n$. So $g+n=n+g$, and therefore $g\in C_G(N)$. Therefore $x\in \bar Q$. 

Thus $\bar{Q}=Ker(Q\to Aut(N))$ and hence the conjugation yields an action of $Q$ on $\bar Q$. So $Z^1(Q,\bar Q)$ is defined and we have the following:
\begin{Le}\label{16}
There is a bijection between $End^N(Q)$ and $Z^1(Q,\bar Q)$.
\end{Le}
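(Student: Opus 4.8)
The plan is to run the same argument as in Lemma~\ref{15}, writing an endomorphism of $Q$ in the standard form $\phi(x)=\psi(x)+x$ and identifying the action-preserving condition defining $End^N(Q)$ with the requirement that the associated cocycle $\psi$ take values in the subgroup $\bar Q$. Concretely, letting $Q$ act on itself by conjugation, Lemma~\ref{iso} gives a bijection $\phi\mapsto\psi$, where $\psi(x):=\phi(x)-x$, between $End(Q)$ and $Z^1(Q,Q)$, under which $\phi$ is a homomorphism exactly when $\psi$ is a crossed homomorphism. It then remains to single out those $\psi$ for which $\phi\in End^N(Q)$.

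For this I would use that the $Q$-action on $N$ is a homomorphism $Q\to Aut(N)$, so that $\phi(x)\cdot n=(\psi(x)+x)\cdot n=\psi(x)\cdot(x\cdot n)$ for every $n\in N$. Since $x$ acts bijectively on $N$, the defining condition $x\cdot n=\phi(x)\cdot n$ for all $n$ is equivalent to $\psi(x)\cdot m=m$ for all $m\in N$, i.e.\ to $\psi(x)\in Ker(Q\to Aut(N))=\bar Q$. As $\bar Q$ is normal in $Q$, the conjugation action of $Q$ on $\bar Q$ is the restriction of its action on itself, so a crossed homomorphism $Q\to Q$ with image in $\bar Q$ is precisely an element of $Z^1(Q,\bar Q)$. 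Reading the same computation backwards shows conversely that every $\psi\in Z^1(Q,\bar Q)$ produces a $\phi\in End^N(Q)$, and the two assignments are mutually inverse by construction, yielding the claimed bijection.

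The computations involved are routine, and the identification $\bar Q=Ker(Q\to Aut(N))$ has already been established above, so I do not expect a genuine obstacle. The one step deserving care is the equivalence in the second paragraph, namely the use of the bijectivity of the $x$-action to pass from ''$\psi(x)$ fixes every element of the form $x\cdot n$'' to ''$\psi(x)$ acts trivially on all of $N$''; this is what pins the image of $\psi$ down to $\bar Q$ rather than some larger set.
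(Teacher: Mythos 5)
Your proposal is correct and follows essentially the same route as the paper: write $\phi(x)=\psi(x)+x$, invoke Lemma~\ref{iso} for the cocycle condition, and use the computation $(\psi(x)+x)\cdot n=\psi(x)\cdot(x\cdot n)$ together with the defining condition of $End^N(Q)$ to force $\psi(x)\in\bar Q=Ker(Q\to Aut(N))$. If anything, you are slightly more careful than the paper, which leaves implicit both the bijectivity-of-the-$x$-action step that you flag and the converse direction showing every $\psi\in Z^1(Q,\bar Q)$ arises from some $\phi\in End^N(Q)$.
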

\begin{proof}
Let $\alpha\in End^N(Q)$ with $\alpha(x)=\phi(x)+x$ for $x\in Q$. We want to show that $\phi\in Z^1(Q,\bar Q)$. The cocycle condition is again given by Lemma \ref{iso}. What is left to show is that $\phi(x)\in \bar Q$, i.e. that the action of $\phi(x)$ on any $n\in N$ is trivial. For $\alpha$ we have $x\cdot n=\alpha(x)\cdot n=(\phi(x)+x)\cdot n=\phi(x)\cdot(x\cdot n)=x\cdot n$. So $\phi\in Z^1(Q, \bar Q)$.
\end{proof}
\begin{Le}\label{cex}
 Let $G$ be a group and let
$$0\to A\to B\to C\to 0$$
be an central extensions of $G$-groups. Then one has an exact sequence of pointed sets
$$0\to Z^1(G,A)\to Z^1(G,B)\to Z^1(G,C)\to H^2(G,A).$$
\end{Le}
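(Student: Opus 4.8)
The plan is to recognise this as the cocycle-level beginning of the long exact sequence in non-abelian cohomology attached to a central extension, as in \cite{serre}, but keeping genuine cocycles in the first terms rather than passing to cohomology classes (note that centrality forces $A$ to be abelian, so $H^2(G,A)$ makes sense). Write $i\colon A\to B$ and $\pi\colon B\to C$ for the structure maps; both are $G$-equivariant homomorphisms, so postcomposition gives maps of pointed sets $i_*\colon Z^1(G,A)\to Z^1(G,B)$ and $\pi_*\colon Z^1(G,B)\to Z^1(G,C)$, the basepoints being the trivial cocycles. Exactness at the first two spots is routine. Since $i$ is injective and a crossed homomorphism is determined by its values, $i_*$ is injective. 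A cocycle $\phi\in Z^1(G,B)$ satisfies $\pi_*\phi=0$ precisely when $\phi(G)\subseteq\ker\pi=i(A)$; as the $G$-action on $A$ is the restriction of that on $B$, such a $\phi$ is exactly an element of $\operatorname{Im} i_*$. Hence $\operatorname{Im} i_*=\ker\pi_*$.

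Next I would construct the connecting map $\delta\colon Z^1(G,C)\to H^2(G,A)$. Given $\psi\in Z^1(G,C)$, choose a set-theoretic section $s\colon C\to B$ of $\pi$ with $s(0)=0$ and put $u=s\circ\psi$, a lift of $\psi$ which need not be a cocycle. Set $c(g,h)=u(g)+g\cdot u(h)-u(g+h)$; applying the homomorphism $\pi$ and the cocycle identity for $\psi$ shows $c(g,h)\in\ker\pi=i(A)$, so I may view $c$ as an $A$-valued function on $G\times G$. A direct computation from associativity in $B$, using the centrality of $A$ to commute the $A$-valued terms past the $B$-valued ones, shows that $c$ satisfies the inhomogeneous $2$-cocycle identity, i.e. $c\in Z^2(G,A)$. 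Replacing $s$ by another section alters $u$ by an $A$-valued function, and centrality then shows $c$ changes by a coboundary; thus $[c]\in H^2(G,A)$ is well defined, and I set $\delta(\psi)=[c]$. Taking $\psi$ trivial and $u=0$ gives $c=0$, so $\delta$ is a morphism of pointed sets.

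Finally I would verify exactness at $Z^1(G,C)$. If $\psi=\pi_*\phi$ for a genuine cocycle $\phi\in Z^1(G,B)$, then $u=\phi$ is an admissible lift with $c=0$, so $\delta(\psi)=0$ and $\operatorname{Im}\pi_*\subseteq\ker\delta$. Conversely, if $\delta(\psi)=0$ then $c(g,h)=g\cdot a(h)-a(g+h)+a(g)$ for some function $a\colon G\to A$, and the corrected lift $\phi(g):=u(g)-a(g)$ takes the same values modulo $A$ as $u$, so $\pi_*\phi=\psi$; a computation using this identity together with the centrality of $A$ shows $\phi$ is a crossed homomorphism, giving $\ker\delta\subseteq\operatorname{Im}\pi_*$. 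The main obstacle is exactly these two central computations — that $c$ lies in $Z^2(G,A)$, and that the corrected $\phi$ is again a $1$-cocycle. Because the groups are written additively yet are in general non-abelian, one must keep careful track of the order of the terms and invoke the centrality of $A$ at precisely the points where an $A$-valued term must be moved past a $B$-valued one; everything else is formal.
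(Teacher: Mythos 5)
Your proposal is correct and is essentially the paper's own proof: the paper simply cites \cite[Proposition 43, p.~55]{serre} and remarks that the lemma ``has the same proof,'' and your explicit connecting-map construction (lifting $\psi\in Z^1(G,C)$, showing the defect $c(g,h)=u(g)+g\cdot u(h)-u(g+h)$ is a central-valued $2$-cocycle, and correcting the lift when $[c]=0$) is exactly that standard argument, adapted to keep genuine cocycles $Z^1$ rather than classes $H^1$ in the first three terms. The two computations you single out do go through precisely as you indicate, with centrality of $A$ used to move $A$-valued terms past $B$-valued ones.
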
 
\begin{proof} This is a variant of the  exact sequence \cite[Proposition 43, p. 55]{serre} and has the same proof.
\end{proof}

\subsection{Proof of Theorem \ref{th2}} Apply Lemma  \ref{cex}  to the central extenion $0\to N\to C_G(N)\to \bar Q\to 1$ to get the exact sequence 
$$0\to Z^1(Q,N)\to Z^1(Q,C_G(N))\to Z^1(Q,\bar Q)\to H^2(Q,N).$$
The rest follows from Lemma \ref{15} and  Lemma \ref{16}.

\subsection{Proof of Corollary \ref{co2}}
Let us look at the following commutative diagram:
$$\xymatrix{0\ar[r]&End^{N,Q}(G)\ar@{=}[d]\ar[r]&End^N(G)\ar[r]^{q}&End^N(Q)\ar[r]^{\phi}&H^2(Q,N)\ar@{=}[d]\\
0\ar[r]&Aut^{N,Q}(G)\ar[r]&Aut^N(G)\ar@{^{(}->}[u]\ar[r]^{R}&Aut^N(Q)\ar@{^{(}->}[u]_{p}\ar[r]^{\psi}&H^2(Q,N).}$$
Exactness at $Aut^{N,Q}(G)$ and $Aut^N(G)$ are obvious. Take an element $\bar{\alpha}\in Ker(\psi)$. Then there exists an element $\alpha \in End^N(G)$ such that $p(\bar{\alpha})=q(\alpha).$ By the definition of the map $p$ one has the diagram 
$$\xymatrix{0\ar[r]&N\ar@{=}[d]\ar[r]&G\ar[d]^{\alpha}\ar[r]&Q\ar[d]^{\bar{\alpha}}\ar[r] &1\\
0\ar[r]& N\ar[r] &G\ar[r] &Q\ar[r] &1.}$$
By the assumtion $\bar{\alpha}$ is an automorphism. By the 5-Lemma $\alpha$ is also an automorphism. Thus $\alpha \in Aut^N(G)$ and $R(\alpha)=\bar{\alpha}$ and exactness follows.

\section{Examples}\label{Ex}
Given a not necessarily unital ring $R$ and a not necessarily unital $R$-$R$-bimodule $S$, we recall the definition of the semidirect product of rings $S\rtimes R$. As a set, this is the cartesian product of $S$ and $R$. Addition is componentwise, and multiplication is given by
$$(s_1,r_1)* (s_2,r_2)=(r_1s_2+s_1r_2,r_1r_2).$$

Given a split extension $0\to N\to N\rtimes Q\to Q\to 0$, where $N\rtimes Q$ is the semidirect product of groups, one can see that the map $\rho$ in the exact sequence (\ref{eta}) is surjective and hence the connecting map $\eta$ is the zero map. In the short exact sequence of rings $0\to End^{N,Q}(G)\to End^Q_N(G)\xto{\rho} End_Q(N)\to 0$, the ring $End^Q_N(G)$ is a semidirect product of $End^{N,Q}(G)$ and $End_Q(N)$.

The first example is a special case of the above. The second example does not begin with a split extension, but ultimately leads to an exact sequence of rings, where the second term is again a semidirect product of rings.

\subsection{Example 1}\label{Dn}

Let $N=C_n$ and $Q = C_2$. In the abelian extension of groups $0\to C_n\to D_n\to C_2\to 0$, the group $D_n$ is the semidirect product of $C_n$ and $C_2$.

We have $End_{C_2}(C_n)=\mathbb{Z}/ n\mathbb{Z}$, $End^{C_2,C_n}(D_n)=\mathbb{Z}/ n\mathbb{Z}$ and $End_{C_n}^{C_2}(D_n)=\mathbb{Z}/ n\mathbb{Z}\rtimes \mathbb{Z}/ n\mathbb{Z}$ as defined above. The left action of $End_{C_2}(C_n)$ on $End^{C_2,C_n}(D_n)$  is ring multiplication, while the right action is zero.

In terms of endomorphisms, the ring structure on $End_{C_n}^{C_2}(D_n)$ is as follows:
Let $f_{k,l}\in End_{C_n}^{C_2}(D_n)$ and let $x,y$ be the generators of $D_n$. Then $f_{k,l}(x)=xy^k$ and $f_{k,l}(y)=y^{l+1}$.

We have $f_{k,l}\boxplus f_{p,q}=f_{k+p,l+q}$ and $f_{k,l}\boxtimes f_{p,q}=f_{lp,lq}$.

Restricted on $End^{C_2,C_n}(D_n)$, we have $f_{k,0}\boxplus f_{p,0}=f_{k+p,0}$ and $f_{k,0}\boxtimes f_{p,0}=f_{0,0}$, as it should be.

\subsection{Example 2}\label{SL2(Z)}

Let $N = (\mathbb{Z}/ 12\mathbb{Z})^2$ and $Q=SL_2(\mathbb Z)$ with the usual action of matrices on vectors. We now want to consider an extension that does not split. First, we can deduce that $H^2(SL_2(\mathbb Z), (\mathbb{Z}/ 12\mathbb{Z})^2)=\mathbb{Z}/ 2\mathbb{Z}$ using the well-known exact sequence (compare \cite[Ex. 3, p. 52]{br})
$$\cdots\to H^i(SL_2(\mathbb Z),(\mathbb{Z}/ 12\mathbb{Z})^2)\to H^i(C_4, (\mathbb{Z}/ 12\mathbb{Z})^2)\oplus H^i(C_6, (\mathbb{Z}/ 12\mathbb{Z})^2)$$ $$\to H^i(C_2, (\mathbb{Z}/ 12\mathbb{Z})^2)\to H^{i+1}(SL_2(\mathbb Z), (\mathbb{Z}/ 12\mathbb{Z})^2)\to \cdots .$$

Now we define
$$G:=<a,b,x,y \ | \ a^{12}=1, \ b^{12}=1, \ ab=ba, \ x^4=1, \ y^6=1, \ x^2=ay^3,$$
$$ xax^{-1}=b^{-1}, \ xbx^{-1}=a, \ yay^{-1}=b^{-1}, \ yby^{-1}=ab>.$$
The extension $0\to  (\mathbb{Z}/ 12\mathbb{Z})^2\to G\xrightarrow{p} SL_2(\mathbb Z)\to 0$ now corresponds to the non-zero element of $H^2(SL_2(\mathbb Z), (\mathbb{Z}/ 12\mathbb{Z})^2)$. The map $p$ is defined by $p(x)= \begin{pmatrix} 0 & 1  \\ -1 & 0 \\ \end{pmatrix}$, $p(y)=\begin{pmatrix} 0 & 1  \\ -1 & 1 \\ \end{pmatrix}$ and $p(a)=p(b)=\begin{pmatrix} 1 & 0  \\ 0 &1 \\ \end{pmatrix}$. Here the matrices $\begin{pmatrix} 0 & 1  \\ -1 & 0 \\ \end{pmatrix}$ and $\begin{pmatrix} 0 & 1  \\ -1 & 1 \\ \end{pmatrix}$ were used as generators of $ SL_2(\mathbb Z)$.

Next, we have $End_{SL_2(\mathbb Z)}((\mathbb{Z}/ 12\mathbb{Z})^2) = \mathbb{Z}/ 12\mathbb{Z}$ and $$End^{(\mathbb{Z}/ 12\mathbb{Z})^2,SL_2(\mathbb{Z})}(G)=Z^1(SL_2(\mathbb{Z}),(\mathbb{Z}/ 12\mathbb{Z})^2)=(\mathbb{Z}/ 12\mathbb{Z})^2.$$

Now, let us look at $End^{SL_2(\mathbb{Z})}_{(\mathbb{Z}/ 12\mathbb{Z})^2}(G)=Z^1(G,(\mathbb{Z}/ 12\mathbb{Z})^2)$. We can check that this is another semidirect product of rings, $End^{SL_2(\mathbb{Z})}_{(\mathbb{Z}/ 12\mathbb{Z})^2}(G)=S\rtimes R$, where $S =\{ (m,n)\in (\mathbb{Z}/ 12\mathbb{Z})^2 \ | \ m+n \  even \} $ and $R=2\mathbb{Z}/ 12\mathbb{Z}$. The right action of $R$ on $(\mathbb{Z}/ 12\mathbb{Z})^2$ is zero, while the left action is given by $t\cdot (m,n)=(tm,tn)$ for $(m,n)\in (\mathbb{Z}/ 12\mathbb{Z})^2$ and $t\in R$.

In terms of endomorphisms, the ring structure on $End^{SL_2(\mathbb{Z})}_{(\mathbb{Z}/ 12\mathbb{Z})^2}(G)$ is as follows:
Let $f_{(m,n),t}\in End^{SL_2(\mathbb{Z})}_{(\mathbb{Z}/ 12\mathbb{Z})^2}(G)$ and let $a,b,x,y$ be the generators of $G$. Then $f_{(m,n),t}(a)=a^{t+1}$, $f_{(m,n),t}(b)=b^{t+1}$, $f_{(m,n),t}(x)=a^mb^nx$ and $f_{(m,n),t}(y)=a^{\frac{2m-t}{2}}b^{\frac{m+n-t}{2}}y$.

We have $f_{(k,l),s}\boxplus f_{(m,n),t}=f_{(k+m,l+n),s+t}$ and $f_{(k,l),s}\boxtimes f_{(m,n),t}=f_{(sm,sn),st}$.


\begin{thebibliography}{999999999}
\bigskip

\bibitem{br} {\sc K. S. Brown}. Cohomology of groups, Springer-Verlag, (1982).
\bibitem{ht} {\sc P. Hilton} and {\sc U. Stammbach}. A course in homological algebra, Springer-Verlag, (1971).
\bibitem{kap} {\sc I. Kaplansky}. Fields and Rings. The University of Chicago Press, (1969).
\bibitem{passi} {\sc I. B. S. Passi}, {\sc M. Singh} and {\sc M. K. Yadav}. Automorphisms of abelian group extensions. J. of Algebra, 324 (2010). pp. 820-830.
\bibitem{rob} {\sc D. J. S. Robinson}, Applications of cohomology to the theory of groups. Groups-St. Andrews 1981 (St. Andrews, 1981), pp.46-80, London Math. Soc. Lecture Note Ser., 71, Cambridge Univ. Press, Cambridge-New York, 1982.
\bibitem{serre} {\sc J.-P. Serre}. Galois cohomology,  Springer-Verlag,  Berlin (1997).
\bibitem{wells} {\sc C. Wells}. Automorphisms of group extensions, Trans. Amer. Math. Soc. 155 (1971) pp. 189-194.

\end{thebibliography}
\end{document}